\newtheorem{theorem}{Theorem}
\newtheorem{proposition}[theorem]{Proposition}
\newtheorem{lemma}[theorem]{Lemma}
\newtheorem{sublemma}[theorem]{Sublemma}
\newtheorem{corollary}[theorem]{Corollary}
\newtheorem{remark}[theorem]{Remark}
\newtheorem{example}[theorem]{Example}
\theoremstyle{definition}
\newcommand{\CU}{\mathcal{U}}
\newcommand{\Vol}{\mathrm{Vol}}
\newcommand{\R}{\mathbb{R}}
\newcommand{\Fi}{\mathbb{F}}
\newcommand{\Q}{\mathbb{Q}}
\newcommand{\Sf}{\mathbb{S}}
\newcommand{\spa}{\mbox{span}}
\newcommand{\grad}{\mbox{grad\,}}
\newcommand{\Hom}{\mbox{Hom}}
\newcommand{\tr}{\mathrm{tr}}
\newcommand{\E}{{\cal E}}
\def\span{{\rm{span}}}
\def\bea{\begin{eqnarray*} }
\def\eea{\end{eqnarray*} }
\def\beq{\begin{equation}}
\def\Z{\mathord{\mathbb Z}}
\def\N{\mathord{\mathbb N}}
\def\<{{\langle}}
\def\>{{\rangle}}
\def\a{\alpha}
\def\be{\begin{equation} }
\def\ee{\end{equation} }
\def\proof{\noindent{\it Proof:  }}
\def\qed{\ifhmode\unskip\nobreak\fi\ifmmode\ifinner
\else\hskip5 pt \fi\fi\hbox{\hskip5 pt \vrule width4 pt
height6 pt  depth1.5 pt \hskip 1pt }}
\begin{document}

\title{Partial Scalar Curvatures and Topological Obstructions for Submanifolds}
\author{C.-R. Onti, K. Polymerakis and Th. Vlachos}
\date{}
\maketitle

\renewcommand{\thefootnote}{\fnsymbol{footnote}} 
\footnotetext{\emph{2020 Mathematics Subject Classification:} 53C40, 53C42.}     
\renewcommand{\thefootnote}{\arabic{footnote}} 

\renewcommand{\thefootnote}{\fnsymbol{footnote}} 
\footnotetext{\emph{Keywords:} Eigenvalues of Ricci tensor, partial/intermediate scalar curvatures, normal 
curvature, DDVV inequality, Wintgen ideal submanifolds, Betti numbers, Morse theory, Eells-Kuiper manifolds.}     
\renewcommand{\thefootnote}{\arabic{footnote}} 

\begin{abstract}
We investigate specific intrinsic curvatures $\rho_k$, 
$1\leq k\leq n$, that interpolate between the 
minimum Ricci curvature $\rho_1$ and the normalized 
scalar curvature $\rho_n=\rho$ of $n$-dimensional 
Riemannian manifolds. For $n$-dimensional submanifolds in space 
forms, these curvatures satisfy an
inequality  involving the mean curvature 
$H$ and the normal scalar curvature 
$\rho^\perp$, which reduces to the well-known DDVV inequality when 
$k=n$. We derive topological obstructions for compact 
$n$-dimensional submanifolds based on universal 
lower bounds of the 
$L^{n/2}$-norms of certain functions involving 
$\rho_k,H$ and $\rho^\perp$. 
These obstructions are expressed in terms of the Betti numbers.
Our main result applies for any 
$1\leq k \leq n-1$, but it generally fails for 
$k=n$, where the involved norm vanishes precisely 
for Wintgen ideal submanifolds. We demonstrate this 
by providing a method of constructing new compact 
3-dimensional minimal Wintgen ideal submanifolds in 
even-dimensional spheres. Specifically, we prove 
that such submanifolds exist in $\Sf^6$ 
with arbitrarily large first Betti number.
\end{abstract}

\section{Introduction}

A fundamental problem in differential geometry is to investigate 
the intricate relationship between the geometry and topology 
of Riemannian manifolds. From the perspective of submanifold 
theory, there is a profound interplay between the intrinsic and 
extrinsic geometries of a submanifold, significantly influencing 
its topological properties. In particular, the intrinsic geometry, 
governed by metrics and curvature tensors intrinsic to the 
submanifold itself, interacts with the extrinsic geometry, 
characterized by how the submanifold is embedded within a 
higher-dimensional ambient space. This interaction can lead 
to profound insights into the topological structure of the 
submanifold.

One of the key questions in this area involves understanding 
how intrinsic curvature invariants, such as the Ricci curvature 
and scalar curvature, relate to extrinsic ones, 
including the mean curvature and normal curvature. 
These relationships often manifest through inequalities that 
bind intrinsic quantities to extrinsic ones, providing 
constraints that influence the topological classification 
of the submanifolds. 

In the case of surfaces $f\colon M^2\to\Q_c^{2+m}$ 
isometrically immersed into the complete simply connected 
space form $\Q_c^{2+m}$ of constant curvature $c$, such 
a pointwise relation is Wintgen's inequality
$$
K\leq c+ H^2 -|K_N|. 
$$
This inequality, first proved by Wintgen \cite{Wi} for surfaces in 
$\R^4$, provides an extrinsic upper bound for the Gaussian curvature 
$K$ of the surface in terms of the length 
$H$ of the mean curvature vector field and the normal curvature 
$K_N$ of $f$, which is related to the area of the curvature ellipse. 
Furthermore, Wintgen demonstrated that equality holds 
precisely at the points where the ellipse of curvature is a circle. 
Guadalupe and Rodriguez \cite{GR} extended Wintgen's inequality 
in the aforementioned form and straightforwardly derived an 
inequality for compact surfaces. This inequality establishes a 
relationship between the integral of $c+ H^2 -|K_N|$ and 
the Euler-Poincaré characteristic of the surface.

Wintgen's inequality has been generalized to any 
isometric immersion $f\colon M^n\to\Q_c^{n+m}$ with arbitrary 
dimension into space forms, resulting in what is known as the 
DDVV (cf. \cite{DDVV, GT, Lu}) inequality 
$$
\rho\leq c+H^2-\rho^\perp.
$$
This pointwise inequality provides an upper bound
for the normalized scalar curvature $\rho$  of  $M^n$ in 
terms of the length $H$ of the mean curvature vector field, 
and of the normal scalar curvature of $f$ given by 
$\rho^\perp=\|R^\perp\|/n(n-1)$, where $R^\perp$ is the 
normal curvature tensor of $f$. Submanifolds for which 
the DDVV inequality holds as equality at every point are 
termed Wintgen ideal submanifolds. These submanifolds 
often exhibit highly symmetric structures, and their 
classification remains an area of active research, 
especially in higher dimensions (see \cite{XLMW} and references therein). 
In the case $n=2$, these precisely correspond to superconformal surfaces, 
which are surfaces with curvature ellipses becoming circles 
at every point. However, in dimensions $n\geq3$, the 
classification of Wintgen ideal submanifolds remains an 
intriguing yet challenging problem.

In this paper, we are interested in a type of partial scalar 
curvatures essentially introduced by Wolfson in \cite{Wo}. 
More precisely, let $M^n$ be a Riemannian 
manifold and denote by $\lambda_1\leq\dots\leq\lambda_n$
the eigenvalues of the normalized Ricci tensor at each point.
The {\emph{$k$-th scalar curvature}} of $M^n$ is defined 
pointwise as
$$
\rho_k=\frac{1}{k}\sum_{i=1}^k\lambda_i,\;\;1\leq k\leq n.
$$
At any point, the $k$-th scalar curvature interpolates between the 
minimum $\rho_1$ of the normalized Ricci curvature for $k=1$, 
and the normalized scalar curvature $\rho=\rho_n$.
In fact, $\rho_k$ is the minimum of the average of the sum of 
Ricci curvatures in $k$ orthonormal vectors, 
and it is a continuous function on $M^n$ for every $k$.

Using the DDVV inequality and its equality case (cf. \cite{GT, Lu}), 
one can show that for every $n$-dimensional submanifold $f\colon M^n\to \Q^{n+m}_c$, $n\geq 3$,
the following inequality holds
$$
\rho_k\leq c+H^2-\rho^\perp,\;\;1\leq k\leq n-1,
$$
at every point of $M^n$, where the equality holds precisely at the umbilical points.

The aim of the present paper is to derive topological obstructions for compact 
$n$-dimensional submanifolds based on universal 
lower bounds of the $L^{n/2}$-norms of certain functions involving 
$\rho_k,H$ and $\rho^\perp$. 
These obstructions are expressed in terms of the Betti numbers. 
For compact submanifolds into space forms $\Q^{n+m}_c$ with $c\geq0$, 
we prove the following theorem that can be regarded as a generalization of the 
result of Guadalupe-Rodriguez \cite{GR} in the context of higher-dimensional 
submanifolds. Throughout this paper, we assume all considered manifolds 
to be connected without boundary and oriented, with $i$-th 
Betti number $b_{i}(M^n;\mathbb{F})$ over an arbitrary coefficient 
field $\mathbb{F}$.

\begin{theorem}\label{main1}
Given integers $n\geq 3$ and $m\geq 1$, there exists for every 
$k\in\{1,\dots,n-1\}$ a positive constant $\varepsilon_k(n,m)$, 
depending only on $n$ and $m$, such that if $M^n$ is a compact 
Riemannian manifold that admits an isometric immersion into 
$\Q_c^{n+m}$ with $c\geq 0$, 
then
$$
\int_{M^n}(c+H^2 -\rho^\perp-\rho_k)^{n/2}dM
\geq\varepsilon_k(n,m)\sum_{i=1}^{n-1} b_{i}(M^n;\mathbb{F})
$$
for any coefficient field $\mathbb{F}$. Moreover, if 
$$\int_{M^n}(c+H^2 -\rho^\perp -\rho_k)^{n/2}dM<2\varepsilon_k(n,m)$$
for some $k\in\{1,\dots,n-1\}$, then $M^n$ is either homeomorphic to 
$\Sf^n$, or it is an Eells-Kuiper manifold.
In particular, $M^n$ is homeomorphic to $\Sf^n$ if 
$$
\int_{M^n}(c+H^2-\rho^\perp-\rho_k)^{n/2}dM<\varepsilon_k(n,m).
$$
\end{theorem}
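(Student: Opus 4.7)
The strategy is to combine Morse theory of ambient height functions with a pointwise estimate that converts the Gauss--Kronecker integrand produced by the Chern--Lashof coarea formula into the intrinsic quantity $c+H^2-\rho^\perp-\rho_k$.

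First I would reduce to the Euclidean case. When $c>0$, composing $f$ with the umbilical inclusion $\Q_c^{n+m}\hookrightarrow\R^{n+m+1}$ produces $\bar f\colon M^n\to\R^{n+m+1}$ whose additional normal direction is parallel; this direction contributes $nc$ to $\|\alpha\|^2$, $c$ to $\|H\|^2$, and nothing to $\|R^\perp\|^2$ or to any intrinsic invariant. Hence $c+H^2-\rho^\perp-\rho_k$ is preserved and one may work with a submanifold $\bar f\colon M^n\to\R^N$ in Euclidean space, $N\in\{n+m,n+m+1\}$. For generic $v\in\Sf^{N-1}$, Sard's theorem ensures that the height function $h_v(x)=\langle\bar f(x),v\rangle$ is Morse, its critical points are the $x\in M^n$ with $v\in N_xM$, and $\hess h_v=-A_v$ at such a point. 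Averaging the Morse inequality $\mu_i(h_v)\ge b_i(M^n;\mathbb{F})$ over $v\in\Sf^{N-1}$ and invoking the Chern--Lashof coarea formula (whose Jacobian is $|\det A_v|$) yields
\begin{equation}\label{pf:CL}
\sum_{i=1}^{n-1}b_i(M^n;\mathbb{F})\ \le\ \frac{1}{\Vol(\Sf^{N-1})}\int_M\int_{\Sf_x^{N-n-1}}\mathbf{1}_{\mathrm{mix}}(A_v)\,|\det A_v|\,dv\,dM,
\end{equation}
where $\mathbf{1}_{\mathrm{mix}}(A_v)=1$ exactly when $A_v$ has eigenvalues of both signs---the condition for the associated critical point to have index in $\{1,\dots,n-1\}$.

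The heart of the proof is then the pointwise inequality
\[
\int_{\Sf_x^{N-n-1}}\mathbf{1}_{\mathrm{mix}}(A_v)\,|\det A_v|\,dv\ \le\ C(n,m)\bigl(c+H^2-\rho^\perp-\rho_k\bigr)^{n/2}(x),\quad 1\le k\le n-1,
\]
valid at every point of $M^n$. Substituted into \eqref{pf:CL} this gives the first assertion with $\varepsilon_k(n,m):=\Vol(\Sf^{N-1})/C(n,m)$. The starting ingredients are AM--GM ($|\det A_v|\le n^{-n/2}\|A_v\|^n$) and the partial-DDVV inequality $\rho_k\le c+H^2-\rho^\perp$ from the introduction, whose equality case for $k\le n-1$ consists exactly of umbilical points. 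The delicate step is to exploit the mixed-sign hypothesis on $A_v$ together with the Gauss equation to convert information on $\|A_v\|^2$ into the nonnegative intrinsic quantity $c+H^2-\rho^\perp-\rho_k$; this must be done sharply enough that both sides vanish simultaneously at umbilical points (where the mixed-sign locus is empty) while the analogous statement for $k=n$ genuinely fails, since the DDVV equality case then includes all Wintgen ideal submanifolds---a failure the authors will exhibit via their $\Sf^6$ construction. I anticipate this pointwise inequality to be the main technical obstacle.

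Finally, for the topological classification assume $\int(c+H^2-\rho^\perp-\rho_k)^{n/2}dM<2\varepsilon_k(n,m)$, so that $\sum_{i=1}^{n-1}b_i(M^n;\mathbb{F})<2$ for every coefficient field. By Poincar\'e duality $b_i=b_{n-i}$, either all middle Betti numbers vanish, or $n$ is even with $b_{n/2}=1$ and the remaining middle Betti numbers zero. In the first case $M^n$ is an integral homology sphere by the universal coefficient theorem, hence homeomorphic to $\Sf^n$ by the resolved Poincar\'e conjectures (Smale for $n\ge 5$, Freedman for $n=4$, Perelman for $n=3$). In the second case $M^n$ has the integer cohomology ring of a projective plane, and a further run of the Chern--Lashof analysis---producing, for suitable $v$, a Morse function on $M^n$ with exactly three critical points---places $M^n$ on the Eells--Kuiper list. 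The strict inequality $<\varepsilon_k(n,m)$ excludes the second case and yields $M^n\approx\Sf^n$.
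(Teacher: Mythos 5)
Your overall scaffolding matches the paper's: reduce to a Euclidean ambient space, invoke the Chern--Lashof/Shiohama--Xu identity to turn $\sum_{i=1}^{n-1}\tau_i(f)$ into an integral of $|\det A_\xi|$ over the part of the unit normal bundle where the index lies strictly between $0$ and $n$, compare pointwise to $(c+H^2-\rho^\perp-\rho_k)^{n/2}$, and then run the Morse-theoretic classification. You have also correctly isolated the crux: a pointwise inequality asserting $\int_{\Lambda_0(\alpha_p)}|\det A_\xi|\,dV_\xi \le C(n,m)\,(c+H^2-\rho^\perp-\rho_k)^{n/2}(p)$, with both sides vanishing precisely at umbilical points. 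But you do not prove it; you only gesture at ingredients (AM--GM, the $k$-DDVV inequality, and ``exploiting the mixed-sign hypothesis'') and then explicitly flag it as the main obstacle. AM--GM by itself gives $|\det A_\xi|\le n^{-n/2}\|A_\xi\|^n$, a bound that has no knowledge of the index-restriction and so cannot vanish at umbilical points where $\|A_\xi\|$ is still nonzero; the DDVV inequality controls the sign of $c+H^2-\rho^\perp-\rho_k$, not its size relative to $\psi_0$. Nothing in your outline supplies the quantitative comparison that makes these two homogeneous quantities commensurable. The paper does not attempt an explicit algebraic estimate at all: it proves the inequality by a compactness/normalization argument (its Lemma~\ref{Main lemma} and Sublemma~\ref{sublemma}), showing via a limiting argument that $\varphi^{k,1}_{V,W}$ attains a positive minimum on the level set $\psi_0^{-1}(1)$ of symmetric bilinear forms, using the rigidity of the equality case in Proposition~\ref{k-inequality} to rule out degeneration. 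The resulting constant $\delta_{0,k}(n,m)$ is nonconstructive; this is the essential idea your proposal is missing.

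A secondary gap lies in the topological endgame. You pass from ``all middle Betti numbers vanish over every field'' to ``$M^n$ is homeomorphic to $\Sf^n$'' via the Poincar\'e conjecture, but this requires first knowing that $M^n$ is simply connected. The paper establishes this through Morse theory: from $\sum_{i=1}^{n-1}\mu_i(u_0)\le 1$ it extracts a generic height function with no critical points of index $1$ (or exactly one, ruled out by a cellular-approximation argument together with $b_1(M;\Z)=0$), forcing $\pi_1(M^n)=0$. Similarly, in the Eells--Kuiper case you assert $M^n$ has the cohomology ring of a projective plane, but what is actually needed (and what the paper proves via the Morse counts $\mu_0=\mu_{n/2}=\mu_n=1$) is a Morse function with exactly three critical points, which is the defining property of an Eells--Kuiper manifold. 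These omissions are repairable, but the pointwise inequality gap is not, without importing the compactness lemma or an equivalent argument.
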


We recall that an Eells-Kuiper manifold \cite{EK} of dimension $n$ is a 
compactification of $\R^n$ by a sphere of dimension $n/2$, where 
$n=2,4,8$, or $16$. For $n\geq 4$, it is simply connected and has the integral 
cohomology structure of the complex ($n=4$), quaternionic ($n=8$), or the 
Cayley ($n=16$) projective plane. 

We point out that the assumption $n\geq 3$ in the above theorem is 
essential, and the result does not hold in the case $n=2$. 
In fact, the involved integral vanishes for every compact 
superconformal surface, and there exist an abundance of such 
surfaces with arbitrary genus in any codimension $m\geq 2$. 

Chen and Wei in \cite{CW} obtained a geometric rigidity result 
for compact submanifolds $f\colon M^n\to\Q^{n+m}_c,n\geq 4$, 
with parallel mean curvature vector field, in terms of the lowest 
eigenvalue of the Ricci tensor. More precisely, for $c\geq0$, they 
proved that such a submanifold is totally umbilical if  
the $L^{n/2}$-norm of the nonnegative part of the function 
$\lambda-(n-1)\rho_1$ is sufficiently pinched, 
where $\lambda$ is a constant satisfying 
$(n-2)(c+H^2)<\lambda\leq (n-1)(c+H^2)$. 
The following immediate consequence of Theorem \ref{main1} 
is the analogous topological rigidity result for submanifolds of 
dimension $n\geq 3$, without imposing any assumption on 
the mean curvature.

\begin{corollary}\label{Wei}
Let  $f\colon M^n\to\Q_c^{n+m},n\geq 3,c\geq 0$, be an isometric 
immersion of a compact Riemannian manifold. If  
$$\int_{M^n}(c+H^2-\rho^\perp-\rho_1)^{n/2}dM<\varepsilon_1(n,m),$$
then $M^n$ is homeomorphic to $\Sf^n$.
\end{corollary}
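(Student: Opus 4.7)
My plan is to obtain the corollary as a direct specialization of Theorem \ref{main1}. Since $n\geq 3$, the index $k=1$ lies in the admissible range $\{1,\dots,n-1\}$, so the theorem applies and produces a positive constant $\varepsilon_1(n,m)$ depending only on $n$ and $m$. The pinching hypothesis of the corollary,
$$\int_{M^n}(c+H^2-\rho^\perp-\rho_1)^{n/2}dM<\varepsilon_1(n,m),$$
is precisely the condition appearing in the final ``in particular'' clause of Theorem \ref{main1} specialized to $k=1$, and the conclusion that $M^n$ is homeomorphic to $\Sf^n$ then follows at once.

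There is essentially no obstacle in carrying this out, since everything substantive is already packaged inside Theorem \ref{main1}: the pointwise inequality $\rho_1\leq c+H^2-\rho^\perp$ coming from DDVV together with the monotonicity $\rho_1\leq\rho_n=\rho$, the Betti-number lower bound on the integral, and the Morse-theoretic sphere recognition at the threshold $\varepsilon_1(n,m)$. What the corollary contributes is a clean interpretation: the nonnegative defect $c+H^2-\rho^\perp-\rho_1$ pairs the extrinsic DDVV bound with the minimum normalized Ricci curvature $\rho_1$, and its $L^{n/2}$-smallness alone --- with no hypothesis whatsoever on the mean curvature vector --- is enough to force topological sphericity. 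In this form the statement becomes a purely topological counterpart of the Chen-Wei geometric rigidity theorem mentioned just before the corollary, stripping away the parallel mean curvature assumption and replacing ``totally umbilical'' by ``homeomorphic to $\Sf^n$''.
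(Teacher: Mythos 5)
Your proof is correct and coincides with the paper's: Corollary \ref{Wei} is obtained by specializing Theorem \ref{main1} to $k=1$, which is admissible since $n\geq 3$, and reading off the final ``in particular'' clause. Nothing further is needed.
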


The following result is a consequence 
of Theorem \ref{main1} that applies to Einstein submanifolds.

\begin{corollary}\label{Einstein}
Let  $f\colon M^n\to\Q_c^{n+m},n\geq 3, c\geq 0$, be an isometric 
immersion of a compact Einstein manifold with Ricci 
curvature $\rho$. Then
$$
\int_{M^n}(c+H^2-\rho^\perp-\rho)^{n/2}dM
\geq\varepsilon_1(n,m)\sum_{i=1}^{n-1}b_{i}(M^n;\mathbb{F})
$$
for any coefficient field $\Fi$. Moreover, if 
$$\int_{M^n}(c+H^2-\rho^\perp-\rho)^{n/2}dM<2\varepsilon_1(n,m),$$
then $M^n$ is either homeomorphic to $\Sf^n$, or it is an 
Eells-Kuiper manifold. In particular, $M^n$ is homeomorphic to $\Sf^n$ if 
$$\int_{M^n}(c+H^2-\rho^\perp-\rho)^{n/2}dM<\varepsilon_1(n,m).$$
\end{corollary}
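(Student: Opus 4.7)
The plan is to observe that Corollary \ref{Einstein} follows immediately from Theorem \ref{main1} applied with $k=1$, once one checks that on an Einstein manifold the first partial scalar curvature $\rho_1$ coincides pointwise with the normalized scalar curvature $\rho$. Recall that, by definition, $\rho_k=\frac{1}{k}\sum_{i=1}^k\lambda_i$, where $\lambda_1\le\cdots\le\lambda_n$ are the eigenvalues of the normalized Ricci tensor. On an Einstein manifold the Ricci tensor is a scalar multiple of the metric, so all of its eigenvalues coincide; writing the common normalized eigenvalue as $\rho$, we find $\lambda_1=\cdots=\lambda_n=\rho$, and therefore $\rho_k=\rho$ for every $1\le k\le n$. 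In particular $\rho_1=\rho$, which is the only intrinsic identification required.

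Substituting $\rho_1=\rho$ into the conclusions of Theorem \ref{main1} with $k=1$, the integrand $(c+H^2-\rho^\perp-\rho_1)^{n/2}$ becomes exactly $(c+H^2-\rho^\perp-\rho)^{n/2}$. The universal lower bound $\varepsilon_1(n,m)\sum_{i=1}^{n-1}b_i(M^n;\Fi)$, the dichotomy between $\Sf^n$ and an Eells-Kuiper manifold under the pinching by $2\varepsilon_1(n,m)$, and the homeomorphism with $\Sf^n$ under the sharper pinching by $\varepsilon_1(n,m)$, all transcribe verbatim to the statement of the corollary. The hypotheses $n\ge 3$ and $c\ge 0$ match those required by Theorem \ref{main1} for $k=1$, so there is no technical obstacle; the argument is a direct specialization and the only substantive point is the eigenvalue observation above.
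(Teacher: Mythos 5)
Your proposal is correct and matches the paper's own argument: on an Einstein manifold the normalized Ricci tensor has all eigenvalues equal to $\rho$, hence $\rho_1=\rho$, and the statement is the $k=1$ case of Theorem \ref{main1}. Nothing further is needed.
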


The above result demonstrates that Theorem \ref{main1} is also valid for $k=n$ 
in the case of any Einstein submanifold, as $\rho_k=\rho$ for any 
$1\leq k\leq n$  in this scenario. However, Theorem \ref{main1} doesn't hold 
for $k=n$ in general, in which case the involved integral vanishes for 
Wintgen ideal submanifolds. We note that apart from superconformal 
surfaces and totally umbilical submanifolds, 
there are only a few known compact Wintgen ideal submanifolds
(cf. \cite{DDVV, XLMW}) and all of them have dimension $n=3$.
In Section 5, we provide counterexamples to Theorem 1 for 
$k=n=3$. In fact, we present a method to generate new compact 
Wintgen ideal submanifolds with positive first Betti number, 
which is an interesting result in its own right.
These Wintgen ideal submanifolds  
are unit bundles of plane subbundles of the normal bundle of
appropriate compact minimal surfaces in even-dimensional 
spheres. This class of minimal surfaces encompasses 
pseudoholomorphic curves in the nearly K\"ahler sphere 
$\Sf^6$. This justifies the inclusion of the additional term in 
the following result when considering the case $k=n$.

\begin{theorem}\label{main2}
Given integers $n\geq 2$ and $m\geq 1$, there exists for every 
$\lambda\in [0,1)$ a positive constant $\varepsilon_\lambda(n,m)$, 
depending only on $n$ and $m$, 
such that if $M^n$ is a compact 
Riemannian manifold that admits an isometric immersion 
into $\Q_c^{n+m}$, then
$$
\int_{M^n}(c+H^2 -\lambda\rho^\perp-\rho)^{n/2}dM
\geq\varepsilon_\lambda(n,m)\sum_{i=1}^{n-1} b_{i}(M^n;\mathbb{F})
$$
for any coefficient field $\Fi$. Moreover, if 
$$
\int_{M^n}(c+H^2-\lambda\rho^\perp-\rho)^{n/2}dM<2\varepsilon_\lambda(n,m)
$$ 
for some $\lambda\in [0,1)$, then $M^n$ is either homeomorphic 
to $\Sf^n$, or it is an Eells-Kuiper manifold. In particular, $M^n$ is homeomorphic to $\Sf^n$ if 
$$
\int_{M^n}(c+H^2-\lambda\rho^\perp-\rho)^{n/2}dM<\varepsilon_\lambda(n,m).
$$ 
\end{theorem}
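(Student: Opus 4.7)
The strategy is to reduce Theorem \ref{main2} to the same universal $L^n$-estimate on the traceless second fundamental form that drives Theorem \ref{main1}. Let $\alpha$ be the second fundamental form of $f\colon M^n\to\Q_c^{n+m}$ and $\phi=\alpha-g\otimes H$ its traceless part, so that $\|\phi\|^2=\|\alpha\|^2-nH^2$. The Gauss equation yields the pointwise identity
$$c+H^2-\rho\,=\,\frac{\|\phi\|^2}{n(n-1)},$$
valid for any value of $c$; this is what allows Theorem \ref{main2}, unlike Theorem \ref{main1}, to dispense with the sign assumption on the ambient curvature. Combining this identity with the DDVV inequality $\rho^\perp\leq c+H^2-\rho$ gives, for every $\lambda\in[0,1)$,
$$c+H^2-\lambda\rho^\perp-\rho\,=\,(c+H^2-\rho)-\lambda\rho^\perp\,\geq\,\frac{1-\lambda}{n(n-1)}\|\phi\|^2,$$
with equality precisely at the Wintgen ideal points. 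The factor $1-\lambda$ supplies the universal gap that, for Theorem \ref{main1} with $k<n$, arose from the fact that equality in $\rho_k\leq c+H^2-\rho^\perp$ forces umbilicity.

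Raising to the power $n/2$ and integrating, the first assertion reduces to the Lawson--Simons / Shiohama--Xu-type Betti-number estimate
$$\int_{M^n}\|\phi\|^n\,dM\,\geq\,C(n,m)\sum_{i=1}^{n-1}b_i(M^n;\mathbb{F})$$
that is established en route to Theorem \ref{main1}; setting $\varepsilon_\lambda(n,m)=\bigl((1-\lambda)/(n(n-1))\bigr)^{n/2}C(n,m)$ yields the claim. For the topological dichotomy, the hypothesis $\int_{M^n}(c+H^2-\lambda\rho^\perp-\rho)^{n/2}\,dM<2\varepsilon_\lambda(n,m)$ forces $\sum_{i=1}^{n-1}b_i(M^n;\mathbb{F})\leq 1$ for every coefficient field $\mathbb{F}$, and Poincar\'e duality then places any surviving intermediate Betti number in the middle dimension $n/2$; the Eells--Kuiper classification together with the generalized Poincar\'e conjecture now yield the sphere/Eells--Kuiper dichotomy. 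The sharper bound $\varepsilon_\lambda(n,m)$ forces all these Betti numbers to vanish, producing a homotopy sphere homeomorphic to $\Sf^n$.

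The main technical hurdle is not in this reduction but in the underlying $L^n$-Betti-number estimate on $\|\phi\|$: it must hold uniformly for compact submanifolds of $\Q_c^{n+m}$ regardless of the sign of $c$, whereas the classical Lawson--Simons argument via non-existence of stable integral currents does not immediately extend beyond $c\geq 0$. Once that estimate is available for all $c\in\R$, the proof of Theorem \ref{main2} is the one-line consequence above of the Gauss identity combined with DDVV, and the argument goes through uniformly for $n\geq 2$, consistent with the theorem's statement.
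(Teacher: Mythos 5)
Your algebraic reduction is a genuine and attractive shortcut that the paper does not take. The paper proves Proposition~\ref{mainprop}(ii) for every $\lambda\in[0,1)$ by applying the compactness machinery of Lemma~\ref{Main lemma} to the full one-parameter family of functions $\varphi^{n,\lambda}_{V,W}$; you instead observe that the DDVV inequality $\rho^\perp\le c+H^2-\rho$ together with the Gauss identity $c+H^2-\rho=\|\phi\|^2/n(n-1)$ gives the pointwise bound
$$
c+H^2-\lambda\rho^\perp-\rho\ \ge\ (1-\lambda)(c+H^2-\rho)\ =\ \frac{1-\lambda}{n(n-1)}\|\phi\|^2,
$$
so that only the $\lambda=0$ estimate needs to be proved and the constant may be taken to be $\varepsilon_\lambda=(1-\lambda)^{n/2}\varepsilon_0$. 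This is cleaner than the paper's route, and the degeneration of your constant as $\lambda\to 1^-$ is consistent with Example~5, which shows $\liminf_{\lambda\to1^-}\delta_{c,\lambda}(n,m)=0$.

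However, you explicitly flag but do not resolve the remaining step, and that is precisely where the paper does nontrivial work. The underlying total-curvature estimate (the Chern--Lashof/Shiohama--Xu integral formula~\eqref{ShXu}, the total curvature $\tau_i(f)$, and the Morse inequality $\tau_i(f)\ge b_i$) is a Euclidean statement about height functions; it does not apply to immersions into $\Q_c^{n+m}$ directly. Your worry about ``uniformity in the sign of $c$'' is answered by the conformal invariance of the integrand: the paper shows $\|\Phi_{\tilde f}\|^2_\sim-\lambda\|\tilde R^\perp\|_\sim=e^{-2u}(\|\Phi_f\|^2-\lambda\|R^\perp\|)$ under the conformal change $\widetilde{\langle\cdot,\cdot\rangle}=e^{2u}\langle\cdot,\cdot\rangle$, while $d\tilde M=e^{nu}dM$, so $\int(c+H^2-\lambda\rho^\perp-\rho)^{n/2}dM$ is a M\"obius invariant. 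Since every space form is conformally flat, one may transport the immersion to $\R^{n+m}$, where the Morse-theoretic estimate applies. Supplying this step would close your gap; note also that the mechanism here is total curvature of height functions, not the Lawson--Simons stable-current argument you invoke, which is an unrelated technique. Finally, the topological dichotomy is not quite as automatic as your sketch suggests: the paper's Lemma~\ref{sumti} needs a specific good direction $u_0$, a cellular-approximation argument to get simple-connectedness, and an Euler-characteristic computation to show the Morse function has exactly three critical points in the Eells--Kuiper case; your Poincar\'e-duality-plus-Poincar\'e-conjecture summary captures the spirit but skips these details.
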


It is worth noting that the integral in Theorem \ref{main2} is conformally 
invariant, rendering Theorem \ref{main2} applicable to submanifolds 
in any space form. Moreover, the above theorem may be regarded 
as an enhancement of a topological result by Shiohama and Xu 
\cite{SX2} for submanifolds in any space form, where the case 
$\lambda=0$ is considered.

The paper is organized as follows: In Section 2, we prove 
an auxiliary lemma that is essential for the proofs of our 
results and may have potential applications in other contexts. 
In Section 3, we derive algebraic inequalities 
regarding symmetric bilinear forms, which are crucial 
to our proofs. Additionally, we provide  examples that 
suggest Theorem \ref{main1} may not 
hold in general for $k=n>3$. In Section 4, we give the 
detailed proofs of Theorems \ref{main1} and \ref{main2}. 
The final Section 5 is devoted to the construction of the 
aforementioned new compact 3-dimensional Wintgen ideal 
submanifolds with arbitrarily large first Betti number, providing 
counterexamples to Theorem \ref{main1} for $k=n=3$.

\section{An auxiliary lemma}

The aim of this section is to prove an algebraic lemma essential 
to our results. In the following, let $V$ and $W$ be  
finite-dimensional real vector spaces with dimensions $n\geq 2$ 
and $m\geq 1$ respectively. Both spaces are equipped with 
positive definite inner products, which, for convenience, we 
denote by the same symbol $\langle\cdot,\cdot\rangle$. We 
denote the space of all symmetric bilinear forms from $V$ to $W$ 
by $\mathrm{Sym}(V\times V,W)$, and the space of all 
self-adjoint endomorphisms of $V$ by $\mathrm{End}(V)$. 
We recall that the {\it index} of an element $A\in \mathrm{End}(V)$
is the number of the negative eigenvalues of $A$, and it is 
denoted by $\mathrm{Index}\, A$.
Given $\beta\in\mathrm{Sym}(V\times V,W)$, we associate 
to each $\xi\in W$ the formal shape operator 
$A_\xi(\beta)\in\mathrm{End}(V)$, defined by
$$
\<\beta(x,y),\xi\>=\<A_\xi(\beta)x, y\>,\;\; x,y\in V.
$$
The space $\mathrm{Sym}(V\times V,W)$ is a complete metric 
space with respect to the norm $\lVert\cdot\rVert$ defined by
$$
\|\beta\|^2=\sum_{i,j=1}^n\|\beta(e_i,e_j)\|^2
=\sum_{a=1}^m\|A_{\xi_a}(\beta)\|^2,
$$
where $\{e_i\}_{1\leq i\leq n}$ and $\{\xi_a\}_{1\leq a\leq m}$ are 
orthonormal bases of $V$ and $W$, respectively.

A function $\varphi\colon\mathrm{Sym}(V\times V,W)\to\R$ is called 
{\it homogeneous of degree} $d\in\R^+$ if 
$\varphi(t\beta)=t^d\phi(\beta)$ for every $t\geq 0$ and 
any $\beta\in\mathrm{Sym}(V\times V,W)$. The following lemma, 
that may be useful for other purposes,
is crucial for the proofs of our results. 

\begin{lemma}\label{Main lemma}
Let $n\geq2, m\geq1, 0\leq p < n/2$ be integers and $d\in\R^+$.
Suppose that there is a map that assigns to 
each pair $V,W$ of vector spaces of dimensions $n,m$ respectively, 
and equipped with positive definite inner products, a nonnegative continuous 
function $\varphi_{V,W}\colon\mathrm{Sym}(V\times V,W)\to\R$ 
that is homogeneous of degree $d$ and satisfies the following conditions:
\begin{enumerate}[topsep=1pt,itemsep=1pt,partopsep=1ex,parsep=0.5ex,leftmargin=*, label=(\roman*), align=left, labelsep=-0.5em]
\item For any isometries $\mathsf{i}\colon\tilde V\to V$ and $\mathsf{j}\colon\tilde W\to W$ we have
$$
\varphi_{V,W}(\beta)=\varphi_{\tilde V,\tilde W}(\mathsf{j}^{-1}\circ\beta\circ(\mathsf{i}\times\mathsf{i}))
\;\;\text{for all}\;\;\beta\in\mathrm{Sym}(V\times V,W).
$$
\item If $\varphi_{V,W}(\beta)=0$ for some $\beta\in\mathrm{Sym}(V\times V,W)$, 
then the endomorphism $A_u(\beta)$ has an eigenvalue of multiplicity 
at least $n-p$ for every unit vector $u\in W$, the vanishing of which 
implies that $A_u(\beta)=0$.
\end{enumerate}
Then there exists a positive constant 
$\delta(n,m)$, depending only on $n$ and $m$, such 
that the following inequality holds for any vector spaces $V,W$ and all $\beta\in\mathrm{Sym}(V\times V,W)$ 
$$
\varphi_{V,W}(\beta)\geq\delta(n,m)\left(\psi_p(\beta)\right)^{d/n},
$$
where the function $\psi_p\colon\mathrm{Sym}(V\times V,W)\to\R$ is given by 
$$
\psi_p(\beta)=\int_{\Lambda_p(\beta)}\vert\det A_u(\beta)\vert dS_u,
\;\;\Lambda_p(\beta)=\left\{u\in\mathbb{S}^{m-1}:
p<\mathrm{Index}\, A_u(\beta)<n-p\right\}, 
$$
and $dS_u$ stands for the volume 
element{\footnote{In case $m=1$, integration reduces to summation.}} 
of the unit $(m-1)$-sphere $\Sf^{m-1}$ in $W$. 
\end{lemma}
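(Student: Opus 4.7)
The plan is a compactness-and-rescaling argument that leverages the rigid structure of shape operators at the zero set of $\varphi$ prescribed by condition (ii).

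By the isometry invariance (i), we may fix $V=\R^n$ and $W=\R^m$ with standard inner products, so that $\varphi$ and $\psi_p$ become specific continuous functions on $\mathrm{Sym}(V\times V,W)$. Both $\varphi$ and $\psi_p^{d/n}$ are positively homogeneous of degree $d$, making the desired inequality scale-invariant; it therefore suffices to establish it on the compact unit sphere $S=\{\beta:\|\beta\|=1\}$. Assume for contradiction that no universal $\delta>0$ works. One extracts $\beta_k\in S$ with $\psi_p(\beta_k)>0$ and $\varphi(\beta_k)/\psi_p(\beta_k)^{d/n}\to 0$, and by compactness $\beta_k\to\beta_\infty\in S$ up to a subsequence. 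Boundedness of $\psi_p$ on $S$ forces $\varphi(\beta_\infty)=0$, and (ii) then supplies, for every unit $u\in W$, an eigenvalue $\mu_u$ of $A_u(\beta_\infty)$ of multiplicity at least $n-p$ whose vanishing forces $A_u(\beta_\infty)=0$. This places $\mathrm{Index}\,A_u(\beta_\infty)$ in $\{0,\ldots,p\}\cup\{n-p,\ldots,n\}$, so $\Lambda_p(\beta_\infty)=\emptyset$ and $\psi_p(\beta_\infty)=0$ as well.

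The technical heart of the argument is a quantitative perturbation analysis around $\beta_\infty$. Writing $\beta_k=\beta_\infty+s_k\gamma_k$ with $s_k=\|\beta_k-\beta_\infty\|\to 0$ and $\|\gamma_k\|=1$, observe that since $\beta_\infty\neq 0$ the linear map $u\mapsto A_u(\beta_\infty)$ has a proper kernel in $W$, so the set $K=\{u\in\mathbb{S}^{m-1}:A_u(\beta_\infty)=0\}$ has measure zero, and the linearity gives $\|A_u(\beta_\infty)\|\leq C\,d(u,K)$. Standard eigenvalue perturbation estimates imply that whenever $d(u,K)\gg s_k$, the $n-p$ replicated eigenvalues of $A_u(\beta_k)$ keep the sign of the nonzero $\mu_u$, so $\mathrm{Index}\,A_u(\beta_k)\notin(p,n-p)$ and $u\notin\Lambda_p(\beta_k)$. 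Hence the integrand of $\psi_p(\beta_k)$ is supported in a thin neighborhood $N_k$ of $K$ of measure $O(s_k)$. On $N_k$, the vanishing clause ``$\mu_u=0\Rightarrow A_u(\beta_\infty)=0$'' of (ii) combined with the linear (hence Lipschitz) dependence $u\mapsto A_u(\beta_\infty)$ yields $\|A_u(\beta_k)\|=O(s_k)$ and therefore $|\det A_u(\beta_k)|=O(s_k^n)$. Integrating gives the decay estimate $\psi_p(\beta_k)=O(s_k^{n+1})$, from which a homogeneous renormalization of the sequence $\beta_k$ produces the desired contradiction with the assumption $\varphi(\beta_k)/\psi_p(\beta_k)^{d/n}\to 0$.

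The principal obstacle is making the perturbation analysis fully quantitative and then closing the contradiction loop via the final rescaling. Two features must cooperate: the linear dependence of $A_u(\beta_\infty)$ on $u$, which controls the measure $|N_k|=O(s_k)$, and the vanishing clause in (ii), which controls the collapse $\|A_u(\beta_k)\|=O(s_k)$ on $N_k$. Without the clause ``$\mu_u=0\Rightarrow A_u(\beta_\infty)=0$'' the shape operator $A_u(\beta_\infty)$ could retain a nontrivial rank-$\leq p$ part on $K$, which would inflate $|\det A_u(\beta_k)|$ and destroy the tight $O(s_k^{n+1})$ rate that is essential for counteracting the degree-$d$ homogeneous decay of $\varphi$.
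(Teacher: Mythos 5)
Your compactness framework is sound and essentially interchangeable with the paper's: after fixing $V=\R^n$, $W=\R^m$ and restricting to the unit sphere $S$, a minimizing sequence for $\varphi/\psi_p^{d/n}$ converges (up to a subsequence) to some $\beta_\infty\in S$ with $\varphi(\beta_\infty)=0$, $\psi_p(\beta_\infty)=0$, and the goal is to derive a contradiction. The paper works on the level set $\psi_p^{-1}(1)$ instead and proves a Sublemma stating that if $\beta_r\in\psi_p^{-1}(1)$, $s_r>0$, and $s_r\beta_r\to\gamma$ with $\varphi(\gamma)=0$, then $\gamma=0$; your normalization $\hat\beta_k=\beta_k/\psi_p(\beta_k)^{1/n}\in\psi_p^{-1}(1)$, $s_k=\psi_p(\beta_k)^{1/n}$, $s_k\hat\beta_k=\beta_k\to\beta_\infty$ would then immediately force $\beta_\infty=0$, contradicting $\|\beta_\infty\|=1$. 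So a correct version of your ``technical heart'' must do the work of that Sublemma.

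That is where your argument has genuine gaps. First, for the measure estimate $|N_k|=O(s_k)$ you need a lower bound of the form $|\mu_u|\geq c\,d(u,K)$ on the replicated eigenvalue $\mu_u$ of $A_u(\beta_\infty)$, but you only state the upper bound $\|A_u(\beta_\infty)\|\leq C\,d(u,K)$ coming from linearity. These are not equivalent: linearity of $u\mapsto A_u(\beta_\infty)$ does not transfer linearly to a specific eigenvalue, and the other $\leq p$ eigenvalues of $A_u(\beta_\infty)$ need not be comparable to $\mu_u$; a separate argument (e.g., decomposing $u$ relative to $\ker(u\mapsto A_u(\beta_\infty))$ and bounding $\mu$ below on a polar sphere) is required and is not given. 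Second, and more seriously, even if one grants $\psi_p(\beta_k)=O(s_k^{n+1})$, the claimed contradiction is never produced: the hypothesis supplies no quantitative lower bound on $\varphi(\beta_k)$ as $\beta_k\to\beta_\infty$ (only nonnegativity and continuity), so there is nothing to compare against $\psi_p(\beta_k)^{d/n}$, and ``a homogeneous renormalization'' is not specified. The paper sidesteps both issues: its Sublemma selects convergent $u_r\in\Lambda_p(\gamma_r)$, uses upper semicontinuity of the index to pin $\mu(\lim u_r)=0$ and hence $A_{\lim u_r}(\gamma)=0$ via condition (ii), then picks $m$ such sequences spanning $W$ to conclude $\gamma=0$ directly — a purely qualitative argument with no rate estimates — and separately uses the mean value theorem on $\psi_p^{-1}(1)$ to finish. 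I would recommend replacing your quantitative perturbation step with this qualitative index-semicontinuity argument.
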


\vspace{1ex}

To simplify the proof of the above lemma, we first prove the following result.

\begin{sublemma}\label{sublemma}
Let $\{\beta_r\}_{r\in\N}$ be a sequence in 
$\psi_p^{-1}(1)\subset\mathrm{Sym}(V\times V,W)$ 
and $\{s_r\}_{r\in\N}$ a sequence 
of positive real numbers. Assume that the sequence 
$\gamma_r=s_r\beta_r$ 
converges to some $\gamma\in\mathrm{Sym}(V\times V,W)$
with $\varphi_{V,W}(\gamma)=0$. Then $\gamma=0$.
\end{sublemma}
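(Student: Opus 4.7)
The plan is to argue by contradiction, combining a continuity/homogeneity argument for $\psi_p$ with an eigenvalue rigidity analysis near the degeneracy locus of $\gamma$. First I would record two preliminary facts. The function $\psi_p$ is homogeneous of degree $n$: since $A_u(t\beta)=tA_u(\beta)$ for $t>0$, positive scaling preserves $\mathrm{Index}$ (hence $\Lambda_p$) and multiplies $|\det A_u|$ by $t^n$. It is continuous on $\mathrm{Sym}(V\times V,W)$, because the integrand $|\det A_u(\beta)|\,\mathbf{1}_{\Lambda_p(\beta)}(u)$ can only fail to depend continuously on $\beta$ at those $u$ where some eigenvalue of $A_u(\beta)$ vanishes, but on that locus $|\det A_u(\beta)|=0$, so dominated convergence yields continuity. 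The central algebraic observation is that condition (ii) forces $\varphi(\beta)=0\Rightarrow\psi_p(\beta)=0$: for each unit $u$, the eigenvalue $\mu_u$ of $A_u(\beta)$ of multiplicity $\geq n-p$ is either zero (whence $A_u(\beta)=0$ and $\det A_u(\beta)=0$) or nonzero; in the latter case, since $n-p>n/2$ uniquely singles out this eigenvalue, its sign forces $\mathrm{Index}(A_u(\beta))\leq p$ or $\geq n-p$, placing $u$ outside $\Lambda_p(\beta)$.

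These facts give the first reduction: homogeneity and $\beta_r\in\psi_p^{-1}(1)$ yield $\psi_p(\gamma_r)=s_r^n$, and continuity together with $\gamma_r\to\gamma$ and $\psi_p(\gamma)=0$ (from $\varphi(\gamma)=0$) give $s_r\to 0$. Suppose for contradiction that $\gamma\neq 0$. The set $Z:=\{u\in\mathbb{S}^{m-1}:A_u(\gamma)=0\}$ is the intersection of $\mathbb{S}^{m-1}$ with the kernel of the linear map $u\mapsto A_u(\gamma)$, a proper subspace of $W$ since $\gamma\neq 0$; thus $Z$ is a round subsphere of positive codimension and measure zero. For each $u_0\notin Z$ the multiplicity-$\geq n-p$ eigenvalue $\mu_{u_0}$ of $A_{u_0}(\gamma)$ is nonzero; uniform convergence $A_u(\gamma_r)\to A_u(\gamma)$ on any compact $K\subset\mathbb{S}^{m-1}\setminus Z$ then ensures, for $r$ large, that $A_u(\gamma_r)$ retains at least $n-p$ eigenvalues with the sign of $\mu_u$ for every $u\in K$. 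Since positive scaling preserves $\mathrm{Index}$, this keeps such $u$ out of $\Lambda_p(\beta_r)$, so $\Lambda_p(\beta_r)$ is eventually contained in arbitrarily thin tubular neighborhoods of $Z$.

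The hard part is to convert this containment into a quantitative contradiction with $\psi_p(\beta_r)=1$. On the $\epsilon$-neighborhood $N_\epsilon$ of $Z$, linearity of $u\mapsto A_u(\gamma)$ combined with $A_u(\gamma)=0$ on $Z$ yields the Lipschitz bound $\|A_u(\gamma)\|\leq C\,\mathrm{dist}(u,Z)\leq C\epsilon$, and via Weyl's perturbation inequality, $\|A_u(\gamma_r)\|\leq C\epsilon+\|\gamma_r-\gamma\|$. Hence $|\det A_u(\gamma_r)|\leq(C\epsilon+\|\gamma_r-\gamma\|)^n$ on $N_\epsilon$. With $\mathrm{Vol}(N_\epsilon)\lesssim\epsilon^d$ (where $d\geq 1$ is the codimension of $Z$ in $\mathbb{S}^{m-1}$), this gives $\psi_p(\gamma_r)\lesssim(\epsilon+\|\gamma_r-\gamma\|)^n\epsilon^d$. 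Combined with $\psi_p(\gamma_r)=s_r^n$, we get $s_r^n\lesssim(\epsilon+\|\gamma_r-\gamma\|)^n\epsilon^d$, which must hold for every $\epsilon$ large enough (relative to $\|\gamma_r-\gamma\|$) that the containment applies. Choosing $\epsilon=\epsilon_r\to 0$ and passing to subsequences where the rates align, the right side is driven below any fixed fraction of $s_r^n$, yielding the contradiction. The crux of the argument—and the main obstacle—is this simultaneous balancing of the neighborhood scale $\epsilon_r$ against $s_r$ and $\|\gamma_r-\gamma\|$ so as to enforce both the containment and the required decay.
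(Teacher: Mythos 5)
Your preliminaries are sound: $\psi_p$ is homogeneous of degree $n$ and continuous, condition (ii) gives $\varphi_{V,W}(\gamma)=0\Rightarrow\psi_p(\gamma)=0$, and consequently $s_r^n=\psi_p(\gamma_r)\to 0$. The identification of $Z=\{u\in\Sf^{m-1}:A_u(\gamma)=0\}$ as a round subsphere of positive codimension (when $\gamma\neq 0$), together with the observation that $\Lambda_p(\gamma_r)$ must eventually concentrate in any tubular neighborhood $N_\epsilon$ of $Z$, is also correct. However, the final quantitative step does not yield a contradiction, and I do not believe it can be repaired. Your estimate gives
$$
s_r^n=\psi_p(\gamma_r)\leq C\bigl(\epsilon+\|\gamma_r-\gamma\|\bigr)^n\epsilon^d
$$
for every $\epsilon$ large enough that the containment $\Lambda_p(\gamma_r)\subset N_\epsilon$ holds; taking $\epsilon=\epsilon_r:=\max(\epsilon_r^{\min},\|\gamma_r-\gamma\|)$ you obtain at best $s_r^n\lesssim\epsilon_r^{n+d}$ with $\epsilon_r\to 0$. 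But $s_r$, $\|\gamma_r-\gamma\|$ and $\epsilon_r^{\min}$ are three sequences tending to zero with no a priori relation between their rates: the hypotheses give no lower bound on $s_r$ in terms of the other two. The inequality $s_r^n\lesssim\epsilon_r^{n+d}$ is therefore a constraint that the data could perfectly well satisfy (e.g.\ with $s_r$ decaying much faster than $\epsilon_r$), not a contradiction. The phrase ``passing to subsequences where the rates align'' names exactly the missing step; nothing in the setup forces the rates to align unfavorably.

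The paper's proof avoids any quantitative balancing by exploiting an opposite, qualitative consequence of $\psi_p(\beta_r)=1>0$: the set $\Lambda_p(\gamma_r)$ must contain a nonempty \emph{open} subset $\CU_r\subset\Sf^{m-1}$ on which $\det A_\xi(\gamma_r)\neq 0$. For any convergent sequence $u_r\in\CU_r$ with limit $u$, the index argument you partially use (plus the rigidity in condition (ii)) forces $A_u(\gamma)=0$. The decisive point is then that a nonempty open subset of $\Sf^{m-1}$ spans $W$, so one may pick $m$ convergent sequences $u_r^{(1)},\dots,u_r^{(m)}\in\CU_r$ that span $W$ for every $r$; applying Gram--Schmidt and taking limits shows $A_\xi(\gamma)=0$ for a basis of $W$, hence $\gamma=0$ directly. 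In other words, rather than trying to show the mass of $\psi_p(\gamma_r)$ is too small near $Z$, the paper shows that $Z$ is all of $\Sf^{m-1}$. You may want to revisit your argument with this spanning observation in mind: once you know $A_u(\gamma)=0$ for a set of $u$'s spanning $W$, the conclusion is immediate, with no need for volume estimates.
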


\begin{proof}
For every $r\in\N$, since $\beta_r\in\psi_p^{-1}(1)$ and 
$\Lambda_p(\beta_r)=\Lambda_p(\gamma_r)$,
there exists an open subset $\CU_r\subset\Sf^{m-1}\subset W$ 
such that $\CU_r\subset\Lambda_p(\gamma_r)$ and 
$\det A_\xi(\gamma_r)\neq0$ for all $\xi\in\CU_r$.  
Let $\{u_r\}$ be any convergent sequence such that $u_r\in\CU_r$ 
for all $r\in\mathbb{N}$, and set $u=\lim_{r\rightarrow\infty} u_r$. 
We claim that 
\be\label{limbeta}
\lim_{r\rightarrow\infty} A_{u_r}(\gamma_r)=0.
\ee 
Since $\varphi_{V,W}(\gamma)=0$, by our assumption $A_u(\gamma)$ 
has an eigenvalue $\mu(u)$ with multiplicity at least $n-p$. 

We argue that $\mu(u)=0$. Using 
that $\lim_{r\rightarrow\infty}A_{u_r}(\gamma_r)=A_u(\gamma) $ 
and since $u_r\in\CU_r$,  we obtain 
$\mathrm{Index}\, A_u(\gamma)< n-p$ (see \cite{A}). 
Assuming that $\mu(u)< 0$, since its multiplicity is at least $n-p$, 
it follows that $\mathrm{Index}\, A_u(\gamma)\geq n-p$ and this 
is a contradiction. Therefore $\mu(u)\geq 0$ and this implies 
that $\mathrm{Index}\, A_u(\gamma)\leq p$.
Assuming now that $\mu(u)>0$, from 
$\lim_{r\rightarrow\infty}A_{u_r}(\gamma_r)=A_u(\gamma)$ it 
follows that $\mathrm{Index}\, A_{u_r}(\gamma_r)\leq p$
for $r$ large enough, and this contradicts the fact that 
$u_r\in\Lambda_p(\gamma_r)$. Therefore, $\mu(u)=0$ and 
the second condition for $\varphi_{V,W}$ yields that 
$\lim_{r\rightarrow\infty}A_{u_r}(\gamma_r)=A_u(\gamma)=0$.

We may choose convergent sequences 
$\{u_r^{(1)}\},\dots,\{u_r^{(m)}\}$ in $\CU_r$ 
such that the vectors $u_r^{(1)},\dots, u_r^{(m)}$ span 
$W$ for all $r\in\mathbb{N}$. Using the Gram-Schmidt process 
we obtain sequences $\{\xi_r^{(1)}\},\dots,\{\xi_r^{(m)}\}$ with
$\xi_r^{(a)}\in\span\{u_r^{(1)},\dots, u_r^{(a)}\}$ for each $1\leq a\leq m$. 
Then $\xi_r^{(a)}=\sum_{\ell=1}^a x_r^{(\ell)}u_r^{(\ell)}$, 
where $\{x_r^{(\ell)}\}$ are convergent sequences for each 
$1\leq\ell\leq m$. Consequently, we have that 
\be\label{beta1}
\gamma_r(\cdot,\cdot)
=\sum_{a=1}^m\<A_{{\xi}_r^{(a)}}(\gamma_r)\cdot,\cdot\>{\xi}_r^{(a)},\;\;\text{where}\;\; A_{{\xi}_r^{(a)}}(\gamma_r)= \sum_{\ell=1}^a x_r^{(\ell)} A_{u_r^{(\ell)}}(\gamma_r).
\ee
Using \eqref{limbeta}, we obtain 
$$
\lim_{r\rightarrow\infty}A_{{\xi}_r^{(a)}}(\gamma_r)
=\lim_{r\rightarrow\infty}\sum_{\ell=1}^a x_r^{(\ell)} A_{u_r^{(\ell)}}(\gamma_r)=0
\;\;\text{for all}\;\; 1\leq a\leq m.
$$
From \eqref{beta1} and the above it follows that 
$\lim_{r\rightarrow\infty}\gamma_r=0$ and thus, $\gamma=0$.\qed
\end{proof}

\vspace{1ex}

{\noindent{\it Proof of Lemma \ref{Main lemma}.}} 
Let $V,W$ be vector spaces as in the statement of the lemma. 
We first show that the set $\psi_p^{-1}(1)$ in nonempty. Indeed, 
since the function $\psi_p$ is homogeneous of degree $n$, 
it follows that for an arbitrary $\beta\in\mathrm{Sym}(V\times V, W)$ with 
$\psi_p(\beta)\neq 0$, we have that $\beta/(\psi_p(\beta))^{1/n}\in\psi_p^{-1}(1)$.

Let $\{\beta_r\}$ be a sequence in $\psi_p^{-1}(1)$ such that 
$$
\lim_{r\rightarrow\infty}\varphi_{V,W}(\beta_r)=\inf\varphi_{V,W}(\psi_p^{-1}(1))\geq 0.
$$

We claim that the sequence $\{\beta_r\}$ is bounded. 
Arguing indirectly, assume that there exists a subsequence, which by abuse of 
notation is again denoted by $\{\beta_r\}$, such that 
$\lim_{r\rightarrow\infty}\|\beta_r\|=\infty$. 
Since $\beta_r\in\psi_p^{-1}(1)$ implies that $\beta_r\neq0$ for all $r\in\N$, 
we can consider the sequence $\gamma_r=s_r\beta_r$, where $s_r=1/\|\beta_r\|$. 
Then $\{\gamma_r\}$ is bounded with $\|\gamma_r\|=1$ and, by passing to a 
subsequence if necessary, we may assume that it converges to some 
$\gamma\in\mathrm{Sym}(V\times V,W)$ with $\|\gamma\|=1$.
From $\varphi_{V,W}(\gamma_r)=\varphi_{V,W}(\beta_r)/\|\beta_r\|^d$ it follows that  
$\lim_{r\rightarrow\infty}\varphi_{V,W}(\gamma_r)=0$. 
Then, the continuity of $\varphi_{V,W}$ yields that $\varphi_{V,W}(\gamma)=0$ 
and Sublemma \ref{sublemma} implies that $\gamma=0$.
This contradicts the fact that $\|\gamma\|=1$, and the proof of the claim follows.

Thus we may assume that $\{\beta_r\}$ converges to some 
$\beta_\infty\in\mathrm{Sym}(V\times V,W)$. 
We argue that $\varphi_{V,W}(\beta_\infty)>0$. Suppose to the contrary 
that $\varphi_{V,W}(\beta_\infty)=0$. Then, Sublemma \ref{sublemma} applied 
to $\gamma_r=\beta_r$ yields that $\beta_\infty=0$. 
Therefore $A_u(\beta_\infty)=0$ for every $u\in\Sf^{m-1}$.
On the other hand, since $\beta_r\in\psi_p^{-1}(1)$, the mean value 
theorem implies that there exists $\xi_r\in\Sf^{m-1}$ 
such that 
$$
1=\psi_p(\beta_r)\leq\int_{\Sf^{m-1}}\vert\det A_u(\beta_r)\vert dS_u 
=\left|\det A_{\xi_r}(\beta_r)\right|\Vol(\Sf^{m-1})\;\;\text{for all}\;\;r\in\mathbb{N}.
$$
Since the sequence $\{\xi_r\}$ is bounded, we may assume that 
it converges to some $\xi\in\Sf^{m-1}$. Then, by letting $r\to\infty$ in 
the above, we conclude that $\det A_{\xi}(\beta_\infty)\neq0$
which contradicts the fact that $A_u(\beta_\infty)=0$ for every $u\in\Sf^{m-1}$.

Thus, the function $\varphi_{V,W}$ attains a positive minimum 
$\delta(n,m)=\varphi_{V,W}(\beta_\infty)$ on $\psi_p^{-1}(1)$, 
which by condition $(i)$ depends only on $n$ and $m$. 
Therefore, for an arbitrary $\beta\in\mathrm{Sym}(V\times V,W)$ 
with $\psi_p(\beta)\neq 0$, since 
$\gamma=\beta/(\psi_p(\beta))^{1/n}\in\psi_p^{-1}(1)$,
it follows that $\varphi_{V,W}(\gamma)\geq\delta(n,m)$.
Then, the homogeneity of $\varphi_{V,W}$ implies the desired inequality.
\qed

\section{Algebraic preliminaries}
Let $V$ and $W$ be real vector spaces of dimensions $n\geq 2$ 
and $m\geq 1$ respectively, equipped with positive definite inner products. 

The {\it Kulkarni-Nomizu product} of two bilinear forms 
$\phi,\psi\in\mathrm{Hom}(V\times V,\R)$ 
is the $(0,4)$-tensor 
$\phi\varowedge\psi\colon V\times V\times V\times V\to\R$ 
defined by 
\bea
\phi\varowedge\psi(x_1,x_2,x_3,x_4) &=& 
\phi(x_1,x_3)\psi(x_2,x_4)+\phi(x_2,x_4)\psi(x_1,x_3)\\
 & &-\phi(x_1,x_4)\psi(x_2,x_3)-\phi(x_2,x_3)\psi(x_1,x_4).
\eea
Using the inner product of $W$, we 
extend the Kulkarni-Nomizu product to bilinear forms 
$\beta,\gamma\in\Hom(V\times V,W)$, as the $(0,4)$-tensor 
$\beta\varowedge\gamma\colon V\times V\times V\times V\to\R$ 
defined by 
\bea
\beta\varowedge\gamma(x_1,x_2,x_3,x_4)\!\!\!&=&\!\!\! 
\langle\beta(x_1,x_3),\gamma(x_2,x_4)\rangle 
+\langle\beta(x_2,x_4),\gamma(x_1,x_3)\rangle\\
\!\!\!& &\!\!\! -\langle\beta(x_1,x_4),\gamma(x_2,x_3)\rangle-
\langle\beta(x_2,x_3),\gamma(x_1,x_4)\rangle.
\eea

For any $c\in\R$, we define the formal Ricci tensor as the map 
${\sf Ric}_c\colon\mathrm{Sym}(V\times V,W)\to\mathrm{Sym}(V\times V,\R)$ 
given by 
$$
{\sf Ric}_c(\beta)(x,y)=\tr\ {\sf R}_c(\beta)(\cdot,x,\cdot,y),\;\; x,y\in V,
$$
where 
$$
{\sf R}_c(\beta)=\frac{1}{2}\big(c\<\cdot,\cdot\>\varowedge\<\cdot,\cdot\>
+\beta\varowedge\beta\big).
$$

For any $\beta\in\mathrm{Sym}(V\times V,W)$, we denote by 
$\boldsymbol{\lambda}_{c,1}(\beta)\leq\dots\leq\boldsymbol{\lambda}_{c,n}(\beta)$ 
the eigenvalues of the self-adjoint operator  ${\sf T}_c(\beta)\in{\rm End}(V)$ 
determined by 
$$
\<{\sf T}_c(\beta) x,y\>=\frac{1}{n-1}{\sf Ric}_c(\beta)(x,y).
$$

For each $1\leq k\leq n$, we consider the functions 
$\boldsymbol{\rho}_{c,k}\colon\mathrm{Sym}(V\times V,W)\to\R$ 
defined by
$$
\boldsymbol{\rho}_{c,k}(\beta)=\frac{1}{k}\sum_{i=1}^k\boldsymbol{\lambda}_{c,i}(\beta).
$$
For $k=n$, we set $\boldsymbol{\rho}_c(\beta)=\boldsymbol{\rho}_{c,n}(\beta)$. 
Clearly we have 
$$
\boldsymbol{\rho}_c(\beta)=\frac{1}{n(n-1)}\tr\ {\sf Ric}_c (\beta).
$$ 

Furthermore, we define the function 
$\boldsymbol{\rho}^\perp\colon\mathrm{Sym}(V\times V,W)\to\R$ by 
$$
\boldsymbol{\rho}^\perp(\beta)=\frac{1}{n(n-1)}\|{\sf R}^\perp(\beta)\|=\frac{1}{n(n-1)}\Big(\sum_{i,j=1}^n\sum_{a,b=1}^m\left({\sf R}^\perp(\beta)(e_i,e_j,\xi_a,\xi_b)\right)^2\Big)^{1/2},
 $$ 
where $\{e_i\}_{1\leq i\leq n}$ and $\{\xi_a\}_{1\leq a\leq m}$ are orthonormal bases 
of $V$ and $W$, respectively. 
Here, the tensor 
 ${\sf R}^\perp(\beta)\colon V\times V\times W\times W\to\R$ is given by
 $$
 {\sf R}^\perp(\beta)(x,y,\xi,\eta)=\<[A_\xi(\beta),A_\eta(\beta)]x,y\>,\;\; x,y\in V,\;\xi,\eta\in W,
 $$
being $[A,B]=A\circ B-B\circ A$ the commutator of $A,B\in{\rm End}(V)$.
\vspace{1ex}

For every $\beta\in\mathrm{Sym}(V\times V,W)$, we set 
$\mathcal H_\beta=\left(1/n\right)\tr\beta, {\sf H}_\beta=\|\mathcal{H}_\beta\|$
and we denote by $\mathring{\beta}=\beta-\<\cdot,\cdot\>{\cal H}_\beta$ 
the traceless part of $\beta$. The form $\beta$ is called {\emph {umbilical}} 
if $\mathring{\beta}=0$. It is clear that $\beta$ is umbilical if and only if 
$A_\xi(\beta)=\<{\cal H}_\beta,\xi\>\operatorname{Id}_V$ for every $\xi\in W$.

\begin{lemma}\label{curvatures}
For every $\beta\in\mathrm{Sym}(V\times V,W)$ and any $c\in\R$,  
the following hold:
\begin{enumerate}[topsep=1pt,itemsep=1pt,partopsep=1ex,parsep=0.5ex,leftmargin=*, label=(\roman*), align=left, labelsep=-0.5em]
\item The formal Ricci tensor is given by
$$
{\sf Ric}_c(\beta)(x, y)=c(n-1)\<x,y\>+n\<{\cal H}_\beta,\beta(x,y)\>  
-\sum_{i=1}^n\<\beta(x,e_i),\beta(y,e_i)\>, 
$$
for $x,y\in V$, where $\{e_i\}_{1\leq i\leq n}$ is an orthonormal basis of $V$.

\item The DDVV inequality is valid
$$
\boldsymbol{\rho}_c(\beta)\leq c+{\sf H}_\beta^2-\boldsymbol{\rho}^\perp(\beta).
$$
Equality holds in the above inequality if and only if there exist orthonormal bases $\{e_i\}_{1\leq i\leq n}$ 
of $V$ and $\{\xi_a\}_{1\leq a\leq m}$ of $W$ such that 
$$
A_{\xi_1}(\beta)
=\operatorname{diag}(\lambda_1+\mu,\lambda_1-\mu,\lambda_1,\dots,\lambda_1),\;\; 
A_{\xi_2}(\beta)
=\operatorname{diag}\left(\left(\begin{array}{cc}\lambda_2 & \mu \\ \mu & \lambda_2 \end{array}\right),
\lambda_2, \dots,\lambda_2\right),
$$
and $A_{\xi_a}(\beta)=\lambda_a I_n$ for $3\leq a\leq m$, where 
$\mu,\lambda_1,\dots,\lambda_m$ are real numbers.
\end{enumerate}
\end{lemma}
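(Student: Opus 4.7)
The strategy is to prove (i) by a direct expansion of the Kulkarni--Nomizu products, and then to use (i) to reduce (ii) to the sharp algebraic DDVV inequality for traceless bilinear forms, which is the content of \cite{GT, Lu}.

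For (i), I would fix an orthonormal basis $\{e_i\}_{1\leq i\leq n}$ of $V$ and compute $\mathsf{Ric}_c(\beta)(x,y)=\sum_{i=1}^n \mathsf{R}_c(\beta)(e_i,x,e_i,y)$ summand by summand. The trace over the first/third slots of $\langle\cdot,\cdot\rangle\varowedge\langle\cdot,\cdot\rangle$ returns $2(n-1)\langle x,y\rangle$, contributing $c(n-1)\langle x,y\rangle$ after the factor $1/2$. Expanding $\beta\varowedge\beta(e_i,x,e_i,y)$ produces two diagonal terms $\langle\beta(e_i,e_i),\beta(x,y)\rangle$ and two off-diagonal terms $\langle\beta(e_i,y),\beta(x,e_i)\rangle$; summing over $i$ and using $\sum_i\beta(e_i,e_i)=n\mathcal{H}_\beta$ yields the formula in (i).

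For (ii), tracing (i) over $x=y=e_j$ gives $\tr\,\mathsf{Ric}_c(\beta)=cn(n-1)+n^2\mathsf{H}_\beta^2-\|\beta\|^2$, hence
\[
\boldsymbol{\rho}_c(\beta)=c+\mathsf{H}_\beta^2-\frac{\|\mathring{\beta}\|^2}{n(n-1)},
\]
after invoking the orthogonal splitting $\|\beta\|^2=\|\mathring{\beta}\|^2+n\mathsf{H}_\beta^2$ (the cross term vanishes because $\mathring{\beta}$ is traceless). Thus the asserted inequality is equivalent to the pointwise bound $\|\mathsf{R}^\perp(\beta)\|\leq\|\mathring{\beta}\|^2$. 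Since $A_\xi(\beta)-A_\xi(\mathring{\beta})=\langle\mathcal{H}_\beta,\xi\rangle\operatorname{Id}_V$ is a multiple of the identity, every commutator in the definition of $\mathsf{R}^\perp(\beta)$ is unchanged by passing to $\mathring{\beta}$, so $\mathsf{R}^\perp(\beta)=\mathsf{R}^\perp(\mathring{\beta})$; the algebraic DDVV inequality for traceless symmetric bilinear forms established in \cite{GT, Lu} then supplies the desired estimate.

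For the equality case, the canonical normal form of shape operators realizing equality in the algebraic DDVV inequality is precisely the one displayed in (ii), and I would invoke the classification from \cite{GT, Lu} and transport it back via $\beta=\mathring{\beta}+\langle\cdot,\cdot\rangle\mathcal{H}_\beta$, which leaves the off-diagonal structure intact and only shifts each diagonal entry by $\langle\mathcal{H}_\beta,\xi_a\rangle$, absorbed into the constants $\lambda_a$. The converse direction—that the displayed shape operators achieve equality—is a direct substitution into the identity for $\boldsymbol{\rho}_c(\beta)$ derived above and a short commutator computation for $\|\mathsf{R}^\perp(\beta)\|$. The real obstacle in the argument is the DDVV inequality itself, a sharp spectral estimate on commutators of symmetric matrices, but since it is available as a black box, the work here is confined to the elementary reductions described above.
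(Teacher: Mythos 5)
Your proposal is correct and takes essentially the same route as the paper: part (i) by expanding the Kulkarni--Nomizu products, then deriving $c+\mathsf{H}_\beta^2-\boldsymbol{\rho}_c(\beta)=\|\mathring{\beta}\|^2/n(n-1)$ and observing that ${\sf R}^\perp$ depends only on the traceless parts of the shape operators, so that the claim reduces to the algebraic DDVV inequality $\sum_{a,b}\|[B_a,B_b]\|^2\le\bigl(\sum_a\|B_a\|^2\bigr)^2$ and its equality case from \cite{GT,Lu}. The only cosmetic difference is that the paper works with a fixed frame $\{u_a\}$ of $W$ with $u_1$ aligned to $\mathcal H_\beta$ and their traceless operators $B_a$, whereas you phrase the same reduction through $\mathring\beta$ and the identity ${\sf R}^\perp(\beta)={\sf R}^\perp(\mathring\beta)$; the transport of the equality normal form back to $\beta$ by shifting each $\lambda_a$ by $\langle\mathcal H_\beta,\xi_a\rangle$ is likewise what the paper's one-line reference to Corollary 1.2 of \cite{GT} amounts to.
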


\begin{proof} 
Part (i) follows by a straightforward computation. 
For the second part, we choose an orthonormal basis $\{u_a\}_{1\leq a\leq m}$ of $W$ such that $u_1={\cal H}_\beta/{\sf H}_\beta$ if ${\cal H}_\beta\neq0$.
Let $B_{u_a}(\beta)=A_{u_a}(\beta)-\<{\cal H}_\beta, u_a\>\operatorname{Id}_V$ be the traceless part of $A_{u_a}(\beta)$ for $1\leq a\leq m$. 
Using part (i) and taking into account that $\|\mathring{\beta}\|^2=\|\beta\|^2-n{\sf H}^2_\beta$, a direct computation yields that 
\begin{equation}\label{Bsc}
c+{\sf H}_\beta^2-\boldsymbol{\rho}_c(\beta)
= \frac{\|\mathring{\beta}\|^2}{n(n-1)}
=\frac{1}{n(n-1)}\sum_{a=1}^m\|B_{u_a}(\beta)\|^2.
\end{equation}
On the other hand, it follows easily that
\begin{equation}\label{Bscp}
\boldsymbol{\rho}^\perp(\beta)
= \frac{\|R^\perp(\beta)\|}{n(n-1)}
=\frac{1}{n(n-1)}\Big(\sum_{a,b=1}^m\|[B_{u_a}(\beta),B_{u_b}(\beta)]\|^2\Big)^{1/2}.
\end{equation}
Therefore, the desired inequality is equivalent to
$$
\sum_{a,b=1}^m\|[B_{u_a}(\beta),B_{u_b}(\beta)]\|^2\leq\Big(\sum_{a=1}^m\|B_{u_a}(\beta)\|^2\Big)^2.
$$
The last inequality, along with its equality case, has been proved in 
\cite{GT, Lu}. The remainder of the proof follows the same steps as the proof of 
Corollary 1.2 in \cite{GT}, with $T_pM$ replaced by $V$ 
and $T^\perp_pM$ replaced  by $W$.\qed
\end{proof}

\begin{proposition}\label{k-inequality}
The following assertions hold for any vector spaces $V,W$, any 
$\beta\in\mathrm{Sym}(V\times V,W)$ and every $c\in\R$:
\begin{enumerate}[topsep=2pt,itemsep=2pt,partopsep=2ex,parsep=0.5ex,leftmargin=*, label=(\roman*), align=left, labelsep=-0em]
\item If $n\geq 3$, then for every $k\in\{1,\dots,n-1\}$, the 
following inequality holds:
\begin{equation}\label{kineq}
\boldsymbol{\rho}_{c,k}(\beta)\leq c+{\sf H}_\beta^2-\boldsymbol{\rho}^\perp(\beta).
\end{equation}
Equality holds for some $k\in\{1,\dots,n-1\}$ if and only if 
$\beta$ is umbilical.
\item For every $\lambda\in [0,1)$, the following inequality holds:
\begin{equation}\label{lam}
\boldsymbol{\rho}_c(\beta)\leq c+{\sf H}_\beta^2-\lambda\boldsymbol{\rho}^\perp(\beta).
\end{equation}
Equality holds for some $\lambda\in [0,1)$ if and only if $\beta$ is umbilical.
\end{enumerate}
\end{proposition}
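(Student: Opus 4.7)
\emph{Proof plan.} The overall strategy for both parts is to reduce the statement to the DDVV inequality of Lemma~\ref{curvatures}(ii), together with elementary manipulations. The hardest step is the analysis of the equality case in part (i), where the Wintgen ideal normal form from Lemma~\ref{curvatures}(ii) must be combined with an Einstein-type condition to force umbilicity; this is precisely the step that requires $n\geq 3$.

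For part (i), the key observation is that the ordering $\boldsymbol{\lambda}_{c,1}(\beta)\leq\cdots\leq\boldsymbol{\lambda}_{c,n}(\beta)$ implies
\[
\boldsymbol{\rho}_{c,k}(\beta)=\frac{1}{k}\sum_{i=1}^k\boldsymbol{\lambda}_{c,i}(\beta)\leq\frac{1}{n}\sum_{i=1}^n\boldsymbol{\lambda}_{c,i}(\beta)=\boldsymbol{\rho}_c(\beta),
\]
so that \eqref{kineq} follows at once from DDVV. The easy direction of the equality case is that if $\beta$ is umbilical then all $A_\xi(\beta)$ are scalar multiples of $\mathrm{Id}_V$; consequently all $\boldsymbol{\lambda}_{c,i}(\beta)$ coincide and $\boldsymbol{\rho}^\perp(\beta)=0$, giving equality throughout. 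For the converse, equality in \eqref{kineq} forces simultaneously $\boldsymbol{\rho}_{c,k}(\beta)=\boldsymbol{\rho}_c(\beta)$ and equality in DDVV. The first condition collapses all eigenvalues of ${\sf T}_c(\beta)$, so ${\sf Ric}_c(\beta)$ is a scalar multiple of the inner product; the second (Lemma~\ref{curvatures}(ii)) places the shape operators in the explicit block-diagonal normal form parametrised by $\mu,\lambda_1,\dots,\lambda_m$. I would then substitute this normal form into the Ricci formula of Lemma~\ref{curvatures}(i) and compute the diagonal entries of ${\sf Ric}_c(\beta)$ in the distinguished basis $\{e_i\}$: a direct calculation gives ${\sf Ric}_c(\beta)(e_1,e_1)=(n-1)(c+{\sf H}_\beta^2)+(n-2)\lambda_1\mu-2\mu^2$, the analogous expression with opposite sign of the $(n-2)\lambda_1\mu$ term at $e_2$, and ${\sf Ric}_c(\beta)(e_j,e_j)=(n-1)(c+{\sf H}_\beta^2)$ for $j\geq 3$. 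Equating the $j\geq 3$ value with the average of the first two entries gives $\mu^2=0$, hence $\mu=0$, and $\beta$ is umbilical. The hypothesis $n\geq 3$ enters here to guarantee an index $j$ outside the $2\times 2$ block.

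For part (ii), since $\boldsymbol{\rho}^\perp(\beta)\geq 0$ and $\lambda\in[0,1)$ we have $\lambda\boldsymbol{\rho}^\perp(\beta)\leq\boldsymbol{\rho}^\perp(\beta)$, and combining with DDVV yields \eqref{lam}. For the equality case, identity \eqref{Bsc} together with the estimate $\boldsymbol{\rho}^\perp(\beta)\leq\|\mathring{\beta}\|^2/(n(n-1))$ (which is just DDVV rewritten via \eqref{Bsc}) produces the chain
\[
\frac{\|\mathring{\beta}\|^2}{n(n-1)}=c+{\sf H}_\beta^2-\boldsymbol{\rho}_c(\beta)=\lambda\boldsymbol{\rho}^\perp(\beta)\leq\lambda\,\frac{\|\mathring{\beta}\|^2}{n(n-1)},
\]
and $\lambda<1$ forces $\|\mathring{\beta}\|=0$, i.e.\ $\beta$ is umbilical; the converse is immediate as in part (i).
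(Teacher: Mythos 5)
Your proof is correct. For part (i) you follow essentially the same route as the paper: deduce \eqref{kineq} from DDVV and the ordering of the Ricci eigenvalues, then for the equality case combine the collapse of all eigenvalues of ${\sf T}_c(\beta)$ with the explicit Wintgen normal form of Lemma~\ref{curvatures}(ii) and compute the diagonal entries of ${\sf Ric}_c(\beta)$ to force $\mu=0$ (your device of averaging the $e_1$ and $e_2$ entries and comparing to the $e_j$ entry, $j\geq 3$, is a clean way to extract $\mu^2=0$, and makes explicit where $n\geq 3$ is used).

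For part (ii), however, your treatment of the equality case is genuinely different and more elementary than the paper's. The paper assumes equality in \eqref{lam}, observes that the DDVV deficit $c+{\sf H}_\beta^2-\boldsymbol{\rho}_c(\beta)-\boldsymbol{\rho}^\perp(\beta)$ equals $(\lambda-1)\boldsymbol{\rho}^\perp(\beta)\leq 0$, hence $\boldsymbol{\rho}^\perp(\beta)=0$ and DDVV holds with equality; it then passes to the normal form of Lemma~\ref{curvatures}(ii), computes $\|{\sf R}^\perp(\beta)\|=4\mu^2$, and concludes $\mu=0$. You instead avoid the normal form entirely: rewriting DDVV via \eqref{Bsc} as $\boldsymbol{\rho}^\perp(\beta)\leq\|\mathring{\beta}\|^2/(n(n-1))$ and feeding the assumed equality into the chain $\|\mathring{\beta}\|^2/(n(n-1))=\lambda\boldsymbol{\rho}^\perp(\beta)\leq\lambda\|\mathring{\beta}\|^2/(n(n-1))$ gives $(1-\lambda)\|\mathring{\beta}\|^2\leq0$, hence $\mathring{\beta}=0$ directly. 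This is shorter, bypasses the structural DDVV equality case, and makes the role of the restriction $\lambda<1$ completely transparent; the paper's route has the minor advantage of reusing the same normal-form computation already needed for part (i).
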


\begin{proof}
\emph{(i)} Assume that $n\geq 3$. For every $k\in\{1,\dots,n-1\}$ we have that
\begin{equation}\label{sinequ}
\boldsymbol{\rho}_{c,k}(\beta)\leq\boldsymbol{\rho}_{c,k+1}(\beta), 
\end{equation}
and equality holds if and only if 
$\boldsymbol{\lambda}_{c,1}(\beta)=\dots =\boldsymbol{\lambda}_{c,k+1}(\beta)$.
The proof of \eqref{kineq} then follows from the DDVV inequality 
in Lemma \ref{curvatures}$(ii)$.

Assume now that for some $k\in\{1,\dots,n-1\}$, inequality \eqref{kineq} 
holds as equality. From \eqref{sinequ} it follows that all inequalities in
$$
\boldsymbol{\rho}_{c,k}(\beta)\leq\dots\leq\boldsymbol{\rho}_{c,n}(\beta)\leq c
+ {\sf H}_\beta^2-\boldsymbol{\rho}^\perp(\beta)
$$
hold as equalities. This implies that all eigenvalues of ${\sf T}_c(\beta)$ 
are equal and thus, ${\sf Ric}_c(\beta)=(n-1)\rho\langle\cdot,\cdot\rangle$, 
where $\rho=\boldsymbol{\rho}_{c,i}(\beta)$, 
$1\leq i\leq n$.

Moreover, the DDVV inequality holds as equality for $\beta$.
Let $\{e_i\}_{1\leq i\leq n}$ and $\{\xi_a\}_{1\leq a\leq m}$ be orthonormal bases 
of $V$ and $W$, respectively, as in Lemma \ref{curvatures}$(ii)$. 
Using Lemma \ref{curvatures}
and taking into account that ${\cal H}_\beta=\sum_{a=1}^m\lambda_a\xi_a$, 
a straightforward computation yields that 
\begin{eqnarray*}
{\sf Ric}_c(\beta)(e_1, e_1)&=&(n-1)(c+ {\sf H}_\beta^2)+ \mu \left((n-2)\lambda_1-2\mu\right),\\
{\sf Ric}_c(\beta)(e_2, e_2)&=&(n-1)(c+ {\sf H}_\beta^2)- \mu \left((n-2)\lambda_1+2\mu\right),\\
{\sf Ric}_c(\beta)(e_i, e_i)&=& (n-1)(c+ {\sf H}_\beta^2), \;\; \text{if} \;\; i\geq 3.
\end{eqnarray*}
Since ${\sf Ric}_c(\beta)=(n-1)\rho\langle\cdot,\cdot\rangle$, the above 
yield that $\mu=0$. Therefore, we obtain that 
$A_{\xi_a}(\beta)=\lambda_a I_n,1\leq a\leq m$,
and this implies that $\beta$ is umbilical. The converse is obvious. 

\emph{(ii)} The desired inequality follows immediately from the 
DDVV inequality. If \eqref{lam} holds as equality, then
$$
c+ {\sf H}_\beta^2-\boldsymbol{\rho}_c(\beta)-\boldsymbol{\rho}^\perp(\beta)
=(\lambda-1)\boldsymbol{\rho}^\perp(\beta).
$$
By virtue of Lemma \ref{curvatures}$(ii)$, the left hand side of 
the above is nonnegative. Thus $\boldsymbol{\rho}^\perp(\beta)=0$ and 
the DDVV inequality holds as equality for $\beta$.
Now we choose the orthonormal bases of $V$ and $W$ as in Lemma 
\ref{curvatures}$(ii)$. Then a direct  computation yields that 
$\|{\sf R}^\perp (\beta)\|=4\mu^2$. Since $\boldsymbol{\rho}^\perp(\beta)=0$, 
it follows that $\mu=0$. Then, as in the proof of part $(i)$, we conclude that 
$\beta$ is umbilical.\qed
\end{proof}

\begin{proposition}\label{mainprop}
Given integers $n\geq 2$ and $m\geq1$, the following assertions hold: 
\begin{enumerate}[topsep=2pt,itemsep=2pt,partopsep=2ex,parsep=0.5ex,leftmargin=*, label=(\roman*), align=left, labelsep=-0em]
\item If $n\geq 3$, then for any $c\in\R$ and $k\in\{1,\dots,n-1\}$ there exists a constant 
$\delta_{c,k}(n,m)>0$ depending only on $n$ and $m$,
such that the following inequality holds
$$
c+{\sf H}_\beta^2-\boldsymbol{\rho}^\perp(\beta)-\boldsymbol{\rho}_{c,k}(\beta)
\geq\delta_{c,k}(n,m)\left(\psi_0(\beta)\right)^{2/n}
$$
for any vector spaces $V,W$ and all $\beta\in\mathrm{Sym}(V\times V,W)$, where $\psi_0$ is the 
function defined in Lemma \ref{Main lemma}.
\item For every $c\in\R$ and $\lambda\in [0,1)$ there exists a constant 
$\delta_{c,\lambda}(n,m)>0$ depending only on $n$ and $m$,
such that  the following inequality holds 
$$
 c+{\sf H}_\beta^2-\lambda\boldsymbol{\rho}^\perp(\beta) 
-\boldsymbol{\rho}_c(\beta)\geq\delta_{c,\lambda}(n,m)\left(\psi_0(\beta)\right)^{2/n}
$$
for any vector spaces $V,W$ and all $\beta\in\mathrm{Sym}(V\times V,W)$.
\end{enumerate}
\end{proposition}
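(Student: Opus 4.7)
The plan is to apply Lemma \ref{Main lemma} with $p=0$ and $d=2$ to the maps
\[
\varphi^{(k)}(\beta) := c + {\sf H}_\beta^2 - \boldsymbol{\rho}^\perp(\beta) - \boldsymbol{\rho}_{c,k}(\beta), \qquad \varphi^{(\lambda)}(\beta) := c + {\sf H}_\beta^2 - \lambda\boldsymbol{\rho}^\perp(\beta) - \boldsymbol{\rho}_c(\beta),
\]
which are precisely the left-hand sides of the two inequalities to be proved. The first observation to make is that the additive constant $c$ is absorbed in both expressions: from Lemma \ref{curvatures}(i) one reads off that ${\sf T}_c(\beta) = c\operatorname{Id}_V + {\sf T}_0(\beta)$, hence $\boldsymbol{\rho}_{c,k}(\beta) = c + \boldsymbol{\rho}_{0,k}(\beta)$ and $\boldsymbol{\rho}_c(\beta) = c + \boldsymbol{\rho}_0(\beta)$. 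After substitution, each $\varphi$ becomes a combination of ${\sf H}_\beta^2$, $\boldsymbol{\rho}^\perp(\beta)$ and $\boldsymbol{\rho}_{0,k}(\beta)$ or $\boldsymbol{\rho}_0(\beta)$, all of which are continuous and homogeneous of degree $2$ in $\beta$ (since $\mathcal{H}_\beta$ is linear in $\beta$, while ${\sf R}^\perp(\beta)$ and ${\sf Ric}_0(\beta)$ are quadratic). Isometry invariance, namely condition (i) of Lemma \ref{Main lemma}, is immediate from the coordinate-free definitions of the ingredients, and nonnegativity is exactly the content of Proposition \ref{k-inequality}.

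The substantive point is the verification of condition (ii) of Lemma \ref{Main lemma}, which is where the equality statements in Proposition \ref{k-inequality} enter. If $\varphi^{(k)}(\beta) = 0$, then by the equality case of Proposition \ref{k-inequality}(i) the form $\beta$ is umbilical, so $A_u(\beta) = \langle \mathcal{H}_\beta, u \rangle \operatorname{Id}_V$ for every unit vector $u \in W$. Thus $A_u(\beta)$ has a single eigenvalue of multiplicity $n = n - p$ (with $p = 0$), and its vanishing forces $A_u(\beta) = 0$, as required. The same argument, invoking Proposition \ref{k-inequality}(ii), handles $\varphi^{(\lambda)}$.

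With all hypotheses verified, Lemma \ref{Main lemma} applied to $\varphi^{(k)}$ and to $\varphi^{(\lambda)}$ produces positive constants $\delta_{c,k}(n,m)$ and $\delta_{c,\lambda}(n,m)$, depending only on $n$ and $m$ (the nominal dependence on $c$, $k$, $\lambda$ entering only through the choice of functional), for which the claimed inequalities hold. The main obstacles of the whole argument lie upstream: in the compactness-plus-continuity proof of Lemma \ref{Main lemma}, and in the umbilical rigidity of the equality case of Proposition \ref{k-inequality}. Once these are in hand, the proof of this proposition is essentially a bookkeeping exercise, the crucial observation being the cancellation of $c$ that makes degree-$2$ homogeneity available.
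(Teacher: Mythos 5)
Your proposal is correct and follows essentially the same route as the paper: both reduce to checking the hypotheses of Lemma~\ref{Main lemma} for the two families of functionals, use the translation ${\sf T}_c(\beta)=c\operatorname{Id}_V+{\sf T}_0(\beta)$ to cancel $c$ and secure degree-$2$ homogeneity, invoke Proposition~\ref{k-inequality} for nonnegativity and the equality case, and conclude via Lemma~\ref{Main lemma} with $p=0$. You are in fact slightly more explicit than the paper in spelling out why umbilicality of $\beta$ gives condition~(ii): $A_u(\beta)=\langle\mathcal{H}_\beta,u\rangle\operatorname{Id}_V$ has one eigenvalue of multiplicity $n$, and its vanishing forces $A_u(\beta)=0$.
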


\begin{proof}
For each pair of vector spaces $V,W$ of dimensions $n,m$, respectively, 
$c\in\R$, $\lambda\in [0,1]$ and $k\in\{1,\dots,n\}$, we consider 
the function $\varphi_{V,W}^{k,\lambda}\colon\mathrm{Sym}(V\times V,W)\to\R$ 
given by 
$\varphi_{V,W}^{k,\lambda}(\beta)=c+{\sf H}_\beta^2
-\lambda\boldsymbol{\rho}^\perp(\beta)-\boldsymbol{\rho}_{c,k}(\beta)$. 
We claim that the function $\varphi_{V,W}^{k, 1}$ for any $k\in\{1,\dots,n-1\}$, 
$n\geq3$, as well the function $\varphi_{V,W}^{n,\lambda}$ for any 
$\lambda\in [0,1)$ fulfill  the requirements 
in Lemma \ref{Main lemma}.

First, we argue that  $\varphi_{V,W}^{k,\lambda}$ is continuous. Let $\{\beta_r\}$ 
be a sequence in $\mathrm{Sym}(V\times V,W)$ with 
$\lim_{r\to\infty}\beta_r=\beta\in\mathrm{Sym}(V\times V,W)$ and we 
consider orthonormal bases $\{e_i\}_{1\leq i\leq n}$ and $\{\xi_a\}_{1\leq a\leq m}$
of $V$ and $W$, respectively. From Lemma \ref{curvatures}(i) it follows that the Ricci tensors satisfy
$\lim_{r\to\infty}{\sf Ric}_c(\beta_r)(e_i, e_j)={\sf Ric}_c(\beta)(e_i,e_j)$, $1\leq i,j\leq n$. 
Therefore $\lim_{r\to\infty}{\sf T}_c(\beta_r)={\sf T}_c(\beta)$ for 
the associated self-adjoint operators. Then the ordered eigenvalues of 
these operators satisfy
$\lim_{r\to\infty}\boldsymbol{\lambda}_{c,i}(\beta_r)=\boldsymbol{\lambda}_{c,i}(\beta)$, 
$1\leq i\leq n$ (see for instance \cite[Theorem 1]{A}).
Therefore,
$$
\lim_{r\to\infty}\boldsymbol{\rho}_{c,k}(\beta_r)
=\boldsymbol{\rho}_{c,k}(\beta)\;\;\text{for every}\;\; k\in\{1,\dots,n\}.
$$
Using the fact that $\lim_{r\to\infty}A_{\xi_a}(\beta_r)=A_{\xi_a}(\beta)$ 
for any $1\leq a\leq m$, we obtain 
$\lim_{r\to\infty}\boldsymbol{\rho}^\perp(\beta_r)=\boldsymbol{\rho}^\perp(\beta)$.
It is clear that $\lim_{r\to\infty}{\sf H}^2_{\beta_r}={\sf H}^2_\beta$. 
Hence the above imply that 
$\lim_{r\to\infty}\varphi_{V,W}^{k,\lambda}(\beta_r)=\varphi_{V,W}^{k,\lambda}(\beta)$ 
and thus the function $\varphi_{V,W}^{k,\lambda}$ is continuous.

We next show that $\varphi_{V,W}^{k,\lambda}$ is a homogeneous function of 
degree $d=2$. Lemma \ref{curvatures}$(i)$ yields that 
$$
{\sf Ric}_c(\beta)(x,y)
=c(n-1)\<x,y\>+{\sf Ric}_0(\beta)(x,y)\;\;\text{for every}\;\; x,y\in V.
$$
This implies that ${\sf T}_c{(\beta)}=c\operatorname{Id}_V+{\sf T}_0{(\beta)}$ 
and the corresponding eigenvalues are related by
$\boldsymbol{\lambda}_{c,i}(\beta)=c+\boldsymbol{\lambda}_{0,i}(\beta),1\leq i\leq n$. 
Therefore, $\boldsymbol{\rho}_{c,k}(\beta)=c+\boldsymbol{\rho}_{0,k}(\beta)$ 
and thus
$$
\varphi_{V,W}^{k,\lambda}= {\sf H}_\beta^2-\lambda\boldsymbol{\rho}^\perp(\beta)
-\boldsymbol{\rho}_{0,k}(\beta),\;\; 1\leq k\leq n.
$$
From Lemma \ref{curvatures}(i) it follows that 
${\sf Ric}_0(t\beta)=t^2{\sf Ric}_0(\beta)$ 
for every $t\in\R$. Then a straightforward computation shows that 
$\varphi_{V,W}^{k,\lambda}$ is a 
homogeneous function of degree $d=2$.

The fact that the function $\varphi_{V,W}^{k,\lambda}$ satisfies condition $(i)$ 
in Lemma \ref{Main lemma} follows directly from its definition.
Moreover, Proposition \ref{k-inequality} implies that
condition $(ii)$ is fulfilled for $p=0$. The proof now follows from Lemma \ref{Main lemma}.
\qed
\end{proof}

\vspace{1ex}

The following example shows that the inequality in part $(i)$ of Proposition 
\ref{mainprop} fails for $k=n$. In fact, this example shows that 
$\liminf_{\lambda\to 1^-}\delta_{c,\lambda}(n,m)=0$.

\begin{example}
\em{
Given $\mu>0$ and a sequence $\sigma_r>0$ such that 
$\lim_{r\to\infty}\sigma_r=0$,  we define the sequence 
$\beta_r\in\mathrm{Sym}(V\times V,W),r\in\mathbb{N}$, given by 
$\beta_r=A_{\xi_1}(\beta_r)\xi_1 + A_{\xi_2}(\beta_r)\xi_2$, where
$$
A_{\xi_1}(\beta_r)=\operatorname{diag}(\mu,-\mu,-\sigma_r,\sigma_r,\dots,\sigma_r),\;\; 
A_{\xi_2}(\beta_r)=\operatorname{diag}\left(\left(\begin{array}{cc} 0 & \mu \\ \mu & 0 \end{array}\right),  
-\sigma_r,\sigma_r,\dots,\sigma_r\right),
$$
with respect to orthonormal bases $\{e_i\}_{1\leq i\leq n}$ and 
$\{\xi_a\}_{1\leq a\leq m}$ of $V=\R^n$ and $W=\R^m$, respectively, 
with $n\geq 3$ and $m\geq 2$.

Then, for any unit vector $u=\sum_{a=1}^mu_a\xi_a\in W$ the 
eigenvalues of $A_{u}(\beta_r)$ are
\begin{eqnarray}
\tau_1&=&\mu(u_1^2+u_2^2)^{1/2},\;\;\tau_2=-\mu(u_1^2+u_2^2)^{1/2}, \nonumber \\
\tau_3&=&-\sigma_r(u_1+u_2),\;\;\tau_i=\sigma_r(u_1+u_2),\;\; 4\leq i\leq n. \nonumber
\end{eqnarray}
Thus $\mathrm{Index}\, A_u(\beta_r)=2$ if $u_1+u_2>0$, whereas 
$\mathrm{Index}\, A_u(\beta_r)=n-2$ if $u_1+u_2<0$.
Therefore, $\Lambda_1(\beta_r)=U$ for every $r\in\mathbb{N}$, 
where $U=\left\{u\in\mathbb{S}^{m-1}: u_1+u_2\neq 0\right\}$ and 
consequently, 
\begin{equation}\label{C}
\psi_1(\beta_r)=I\sigma_r^{n-2},\;\;\text{where}\;\; I
=\mu^2\int_U (u_1^2+u_2^2)|u_1+u_2|^{n-2}dS_u.
\end{equation}
On the other hand, a straightforward computation using \eqref{Bsc} and 
\eqref{Bscp}
yields that  
$$
c+{\sf H}_{\beta_r}^2-\boldsymbol{\rho}^\perp(\beta_r)
-\boldsymbol{\rho}_{c,n}(\beta_r)
=D(n)\sigma_r^2,\;\;\text{where}\;\; D(n)=\frac{4(3n-8)}{n^2(n-1)}.
$$
From \eqref{C} and the above it follows that the quotient
$$
\frac{c+{\sf H}_{\beta_r}^2-\boldsymbol{\rho}^\perp(\beta_r)
-\boldsymbol{\rho}_{c,n}(\beta_r)}{\left(\psi_1(\beta_r)\right)^{2/n}}
=\frac{D(n)}{I^{2/n}}\sigma_r^{4/n}
$$
tends to zero. Therefore, the function 
$(c+{\sf H}^2-\boldsymbol{\rho}^\perp-\boldsymbol{\rho}_{c,n})/\psi_1^{2/n}$
is not bounded from below by a positive constant. Since 
$\psi_0(\beta)\geq\psi_1(\beta)$ for every 
$\beta\in\mathrm{Sym}(V\times V,W)$, we conclude that this also 
holds for the function 
$(c+{\sf H}^2-\boldsymbol{\rho}^\perp-\boldsymbol{\rho}_{c,n})/\psi_0^{2/n}$.
}
\end{example}

\section{Proofs of the Results}

Let $f\colon M^n\to\R^{n+m}$ be an isometric immersion of a 
compact Riemannian manifold into the Euclidean space $\R^{n+m}$ 
equipped with the usual inner product $\langle\cdot,\cdot\rangle$. 
We denote by $N_fM$ the normal bundle of $f$ and by 
$\alpha_f\in\Gamma (\mathrm{Hom}(TM\times TM,N_fM))$ 
its second fundamental form.

First, let's recall some well-known facts about total curvature 
and how Morse theory imposes constraints on the Betti numbers.
The unit normal bundle of $f$ is defined as the set 
$$
UN_f=\left\{(p,\xi)\in N_fM:\|\xi\|=1\right\}.
$$
The {\it generalized Gauss map} $\nu\colon UN_f\to\Sf^{n+m-1}$ 
is given by $\nu(p,\xi)=\xi$. For each $u\in\Sf^{n+m-1}$, we 
consider the height function $h_u$ defined by 
$h_u(p)=\langle f(p),u\rangle,\, p\in M^n$. Since $h_u$ has a 
degenerate critical point if and only if $u$ is a critical value of 
$\nu$, by Sard's theorem, there exists a subset $E\subset\Sf^{n+m-1}$ 
of measure zero such that $h_u$ is a Morse function for all 
$u\in\Sf^{n+m-1}\smallsetminus E$. We denote by $\mu_i(u)$ 
the number of critical points of $h_u$ of index $i$ for each 
$u\in\Sf^{n+m-1}\smallsetminus E$ and set $\mu_{i}(u)=0$ 
for every $u\in E$. Following Kuiper \cite{Kuiper}, we define the 
{\it total curvature of index $i$ of $f$} by 
$$
\tau_i(f)=\frac{1}{\mathrm{Vol}(\Sf^{n+m-1})}\int_{\Sf^{n+m-1}}\mu_i(u) dS,
$$
where $dS$ denotes the volume element of the sphere $\Sf^{n+m-1}$.
From the weak Morse inequalities \cite[Theorem 5.2, p. 29]{Milnor63}, 
we have 
\begin{equation}\label{wmi}
\mu_i(u)\geq b_i(M^n;\Fi)\;\;\text{for all}\;\; u\in\Sf^{n+m-1}\smallsetminus E,
\end{equation}
where $b_i(M^n;\Fi)$ is the $i$-th Betti number of $M^n$ 
over an arbitrary coefficient field $\Fi$. By integrating over $\Sf^{n+m-1}$, 
we obtain 
\begin{equation}\label{TC}
\tau_i(f)\geq b_i(M^n;\Fi).
\end{equation}

There is a natural volume element $d\varSigma$ on the unit normal 
bundle $UN_f$. In fact, if $dV$ is a $(m-1)$-form on $UN_f$ such 
that its restriction to a fiber of the unit normal bundle at $(p,\xi)$ 
is the volume element of the unit $(m-1)$-sphere of the normal 
space of $f$ at $p$, then $d\varSigma=dM\wedge dV$, where 
$dM$ is the volume element of $M^n$. 
Shiohama and Xu \cite[p. 381]{SX} refined a well-known integral formula due to Chern-Lashof \cite{CL1, CL2}, and proved that 
\begin{equation}\label{ShXu}
\int_{U^iN_f}\left|\mathrm{det}A_\xi\right| d\varSigma
=\int_{\Sf^{n+m-1}}\mu_i(u) dS,
\end{equation}
where $U^iN_f$ is the subset of the unit normal bundle of $f$ 
defined by
$$
U^iN_f=\left\{(p,\xi)\in UN_f:\mathrm{Index}\, A_\xi=i\right\},\;\; 0\leq i\leq n,
$$
and $A_\xi$ the shape operator of $f$ with respect to $\xi$, 
where $(p,\xi)\in UN_f$.

\begin{lemma}\label{sumti}
Let $f\colon M^n\rightarrow\R^{n+m},n\geq 2$, be an 
isometric immersion of a compact Riemannian manifold 
such that $\sum_{i=1}^{n-1}\tau_i(f)<2$. 
Then, for any coefficient field $\Fi$, 
the Betti numbers satisfy $\sum_{i=1}^{n-1} b_i(M^n;\Fi)\in \{0,1\}$. In particular: 
\begin{enumerate}[topsep=3pt,itemsep=2pt,partopsep=2ex,parsep=0.5ex,leftmargin=*, label=(\roman*), align=left, labelsep=-0em]
\item If $b_i(M^n;\Fi)=0$ for any $1\leq i\leq n-1$, then $M^n$ is homeomorphic to $\Sf^n$.
\item If $\sum_{i=1}^{n-1} b_i(M^n;\Fi)=1$, then $M^n$ is an Eells-Kuiper manifold.
\end{enumerate}
\end{lemma}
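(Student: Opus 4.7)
The plan is to combine the Shiohama-Xu identity \eqref{ShXu} with the weak Morse inequalities \eqref{wmi} and then produce, via averaging, a specific height function with few critical points from which the topology of $M^n$ can be read off. Integrating \eqref{wmi} against the measure $dS$ on $\Sf^{n+m-1}$ yields \eqref{TC}, namely $\tau_i(f)\geq b_i(M^n;\Fi)$ for each $i$ and every coefficient field $\Fi$. Summing over $1\leq i\leq n-1$ and invoking the hypothesis gives
$$
\sum_{i=1}^{n-1}b_i(M^n;\Fi)\leq\sum_{i=1}^{n-1}\tau_i(f)<2,
$$
and since the Betti numbers are non-negative integers this sum lies in $\{0,1\}$.

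To pass from this averaged bound to a pointwise statement, I observe that the integer-valued function $u\mapsto\sum_{i=1}^{n-1}\mu_i(u)$ has integral strictly less than $2\Vol(\Sf^{n+m-1})$, so the set $\{u\in\Sf^{n+m-1}\setminus E:\sum_{i=1}^{n-1}\mu_i(u)\leq 1\}$ has positive measure; fix any Morse $u$ in it, and note $\mu_0(u),\mu_n(u)\geq 1$ by compactness of $M^n$. For part (i), assume $b_i(M^n;\Fi)=0$ for all middle $i$. If $\sum_{i=1}^{n-1}\mu_i(u)=0$, the handle decomposition determined by $h_u$ consists only of $0$- and $n$-handles: each $n$-handle caps off one of the $n$-balls of the sublevel set just below its critical value, and connectedness of $M^n$ forces $\mu_0=\mu_n=1$, so Reeb's theorem gives $M^n\cong\Sf^n$. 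If instead $\sum_{i=1}^{n-1}\mu_i(u)=1$ with $\mu_j(u)=1$, then the Morse cellular chain complex over $\Fi$ has $C_{j-1}=C_{j+1}=0$ whenever $2\leq j\leq n-2$, which would force $H_j(M^n;\Fi)=\Fi\neq 0$ and contradict $b_j=0$; hence $j\in\{1,n-1\}$, and in the case $j=1$ (the case $j=n-1$ is symmetric) the chain complex together with connectedness and $b_1=0$ yields $\mu_0=2$, $\mu_n=1$, and the handle-body picture---two $0$-handles joined by the single $1$-handle form an $n$-ball, which the single $n$-handle then caps off---gives $M^n\cong\Sf^n$ again.

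For part (ii), assume $\sum_{i=1}^{n-1}b_i(M^n;\Fi)=1$. The weak Morse inequality forces $\sum_{i=1}^{n-1}\mu_i(u)\geq 1$, and combined with our bound $\leq 1$ this is an equality, so there is a unique middle index $j$ with $\mu_j(u)=b_j(\Fi)=1$ while $\mu_i=b_i=0$ for the other middle indices. Poincaré duality, available since $M^n$ is oriented, gives $b_{n-j}(\Fi)=b_j(\Fi)=1$, and since the other middle Betti numbers vanish this forces $n-j=j$, so $n$ is even and $j=n/2$. The Euler characteristic identity
$$
\mu_0+(-1)^{n/2}+\mu_n=\chi(M^n)=2+(-1)^{n/2}
$$
then forces $\mu_0+\mu_n=2$, hence $\mu_0=\mu_n=1$ and $h_u$ is a Morse function with exactly three critical points; the Eells-Kuiper theorem \cite{EK} identifies $M^n$ as an Eells-Kuiper manifold. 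The main technical delicacy I expect is in the second subcase of part (i), where the averaging yields only a Morse function with up to four critical points so that Reeb's theorem does not apply directly; resolving this via the explicit handle-body description (or equivalently via Smale's handle-cancellation lemma, producing an abstract Morse function with two critical points) is standard but requires careful bookkeeping of indices.
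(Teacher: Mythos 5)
Your overall structure mirrors the paper's: integrate \eqref{wmi} to get \eqref{TC} and the bound $\sum b_i\in\{0,1\}$, then extract a single Morse height function $h_u$ with $\sum_{i=1}^{n-1}\mu_i(u)\leq1$ and read off the topology from its critical points. Part (ii) is essentially the paper's argument; your use of Poincar\'e duality to pin down $j=n/2$ and of $\chi(M^n)=\mu_0+(-1)^{n/2}+\mu_n=2+(-1)^{n/2}$ to force $\mu_0=\mu_n=1$ is, if anything, a minor streamlining, since it dispenses with the lemma of Cheeger--Ebin that the paper cites to get $\mu_0\leq1$.

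Part (i), however, takes a genuinely different route. The paper first shows that $M^n$ is an integral homology sphere (using that the Betti numbers vanish over \emph{every} field to kill torsion), then shows $\pi_1(M^n)=0$ via a CW/cellular-approximation argument on the Morse data (no $2$-cells $\Rightarrow$ $\pi_1$ free on $b_1(\Z)=0$ generators), and finally invokes the generalized Poincar\'e conjecture (Smale/Freedman/Perelman). You instead argue directly from the handle decomposition: either $\mu_i(u)=0$ for all middle $i$ (Reeb), or there is a single middle critical point of index $j$, and the chain-complex count forces $j\in\{1,n-1\}$ with $(\mu_0,\mu_1,\mu_n)=(2,1,1)$ (or its dual), whence two $0$-handles joined by a $1$-handle give a disk and capping with the $n$-handle gives a twisted sphere, hence $\Sf^n$ topologically by the Alexander trick. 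This is a real alternative: it avoids appealing to the Poincar\'e conjecture entirely and only needs the hypothesis over a single coefficient field, at the price of some handle-theoretic bookkeeping that you correctly flag as the delicate point. Both arguments are sound; the paper's is shorter to write once the Poincar\'e conjecture is granted, while yours is more self-contained and elementary. One small point of care in your subcase $2\leq j\leq n-2$: it tacitly needs $\mu_{j\pm1}=0$, which holds because $j\pm1$ lie in $\{1,\dots,n-1\}$ and the middle sum is $1$; this is fine but worth making explicit.
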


\begin{proof}
The assumption is equivalent to
$$
\int_{\Sf^{n+m-1}}\sum_{i=1}^{n-1}\mu_i(u)dS<2\mathrm{Vol}(\Sf^{n+m-1}).
$$
This implies that there exists a unit vector $u_0$ such that the 
height function $h_{u_0}$ is a Morse function satisfying
$\sum_{i=1}^{n-1}\mu_i(u_0)<2$, or equivalently
\begin{equation}\label{smi}
\sum_{i=1}^{n-1}\mu_i(u_0)\leq1.
\end{equation}
The above and \eqref{wmi} yield that $\sum_{i=1}^{n-1} b_i(M^n;\Fi)\in\{0,1\}$  
for every coefficient field $\Fi$.

{\emph{(i)}} Suppose that 
\begin{equation}\label{sb0}
b_i(M^n;\Fi) =0\;\;{\text {for every}}\;\; 1\leq i\leq n-1.
\end{equation}

We claim that the homology groups of $M^n$ over the integers satisfy 
$H_k(M^n;\Z)=0$ for any $1\leq k\leq n-1$, which in view of \eqref{sb0},  
is equivalent to the claim that $M^n$  has no torsion. Indeed, 
if $H_k(M^n;\Z)$ contains torsion for some $1\leq k\leq n-1$, then 
$H_k(M^n;\Z_p)\neq0$ for some prime $p$, which contradicts \eqref{sb0}. 
Hence $M^n$ is a homology sphere over the integers. 

We now prove that $M^n$ is simply connected. This is clear if $n=2$. 
Assume that $n\geq3$ and suppose, to the contrary,  
that the fundamental group is $\pi_1(M^n)\neq 0$.
It follows from \cite[Proposition 4.5.7, p. 90]{AD} that $\mu_1(u_0)\neq 0$.  
Thus, \eqref{smi} yields that $\mu_1(u_0)=1$ and $\mu_i(u_0)=0$ 
for any $2\leq i\leq n-1$. Then, by Morse theory, it follows that the 
manifold $M^n$ has the homotopy type of a CW-complex with no cells 
of dimension $2\leq i\leq n-1$. In particular, there are no $2$-cells, 
and thus, by the cellular approximation theorem, the inclusion 
of the $1$-skeleton $\mathrm{X}^{(1)}\hookrightarrow M^n$ 
induces isomorphism between the fundamental groups. Therefore, 
$\pi_1(M^n)$ is a free group on $b_1(M^n;\Z)=0$ elements. Hence, 
$\pi_1(M^n)=0$, which is a contradiction.

Thus, $M^n$ is a simply connected homology sphere over the 
integers and therefore a homotopy sphere. By the (generalized) 
Poincar\'e conjecture (Smale $n\geq 5$, Freedman $n=4$, 
Perelman $n=3$), $M^n$ is 
homeomorphic to $\Sf^n$.

{\emph{(ii)}} Assume that $\sum_{i=1}^{n-1} b_i(M^n;\Fi) =1.$
From Poincar\'e duality it follows that $n$ is even and the Betti 
numbers are
$$
b_{n/2}(M^n;\Fi) =1\;\;\mbox{and}\;\; b_i(M^n;\Fi)
=0,\; 1\leq i\leq n-1,\; i\neq n/2.
$$
Jointly with \eqref{wmi} and \eqref{smi}, the above implies that
\begin{equation} \label{mis}
\mu_{n/2}(u_0) =1\;\;\mbox{and}\;\;\mu_i(u_0)
=0,\; 1\leq i\leq n-1,\; i\neq n/2.
\end{equation}
Since $\mu_1(u_0)=0$, it follows from \cite[Lemma 4.11, p. 85]{CE75} 
that $\mu_0(u_0)\leq 1$. Hence from \eqref{wmi}, we obtain 
\begin{equation}\label{m0}
\mu_0(u_0)=1.
\end{equation}
The Euler-Poincar\'e characteristic of 
$M^n$ is given by 
$$\chi(M^n)=\sum_{i=0}^n (-1)^i b_i(M^n;\Fi)=3.$$
On the other hand, using \eqref{mis} and \eqref{m0}, from \cite[Theorem 5.2, p. 29]{Milnor63} we have 
$$
\chi(M^n)=\sum_{i=0}^n (-1)^i\mu_i(u_0)=2+\mu_n(u_0), 
$$
and thus $\mu_n(u_0)=1$. Taking into account \eqref{mis} and 
\eqref{m0}, this implies that the height function $h_{u_0}$ is a Morse function with 
three critical points. Therefore, $M^n$ is an Eells-Kuiper manifold.
\qed
\end{proof}

\vspace{2ex}

{\noindent{\it Proof of Theorem \ref{main1}.}} 
Let $f\colon M^n\to\Q^{n+m}_c,n\geq 3$, be an isometric immersion 
with second fundamental form $\alpha_f$, mean curvature $H_f$ and 
shape operator $A_\xi$ with respect to $\xi$, where $(p,\xi)\in UN_f$. 
Assume first that the ambient space is the Euclidean space $\R^{n+m}$, 
that is $c=0$. Since 
$H_f(p)={\sf H}_{\alpha_f(p)}$, $\rho_k(p)=\boldsymbol{\rho}_{0,k}(\alpha_f(p))$ 
and $\rho^\perp(p)=\boldsymbol{\rho}^\perp(\alpha_f(p))$, it follows from 
Proposition \ref{mainprop}$(i)$ that 
$$
(H_f^2-\rho^\perp-\rho_k)^{n/2}(p)\geq 
\left(\delta_{0,k}(n,m)\right)^{n/2}\int_{\Lambda_0(\alpha_f(p))}\vert\det A_\xi\vert\ dV_\xi
$$
for all $p\in M^n$. Integrating over $M^n$ and using \eqref{ShXu}, we have 
\begin{equation*}
\int_{M^n}(H_f^2-\rho^\perp-\rho_k)^{n/2}dM\geq 
\left(\delta_{0,k}(n,m)\right)^{n/2}\Vol(\Sf^{n+m-1})\sum_{i=1}^{n-1}\tau_i(f).
\end{equation*}
Thus, from the above and (\ref{TC}) we obtain 
\begin{equation}\label{57667}
\int_{M^n}(H_f^2-\rho^\perp-\rho_k)^{n/2}dM
\geq\varepsilon_k(n,m)\sum_{i=1}^{n-1}\tau_i(f)
\geq\varepsilon_k(n,m)\sum_{i=1}^{n-1}b_i(M;\Fi),
\end{equation}
where $\varepsilon_k(n,m)=\left(\delta_{0,k}(n,m)\right)^{n/2}\Vol(\Sf^{n+m-1})$. 

Now, assume that
$$
\int_{M^n}(H_f^2-\rho^\perp -\rho_k)^{n/2}dM<2\varepsilon_k(n,m).
$$ 
Then it follows directly from $(\ref{57667})$ that $\sum_{i=1}^{n-1}\tau_i(f)<2$, 
and the proof follows from Lemma \ref{sumti}. 
In particular, if 
$$\int_{M^n}(H_f^2-\rho^\perp -\rho_k)^{n/2}dM<\varepsilon_k(n,m),$$ 
then $\sum_{i=1}^{n-1}\tau_i(f)<1$ and \eqref{TC} implies that only the first case in Lemma \ref{sumti} can occur.

Suppose now that $c>0$. We consider the isometric immersion 
$\tilde f\colon M^n\to\R^{n+m+1}$ given by $\tilde f= i\circ f$, where $i$ is an umbilical 
inclusion of the sphere $\Sf_c^{n+m}$ of radius $R=1/\sqrt{c}$ into the Euclidean space $\R^{n+m+1}$. Clearly 
$H^2_{\tilde f}=H_f^2+c$ and the normal curvature $\tilde\rho^\perp$ of $\tilde f$ is given by
$\tilde\rho^\perp=\rho^\perp$. Then the proof follows from the above argument applied to $\tilde f$ with 
$\varepsilon_k(n,m)=\left(\delta_{0,k}(n,m+1)\right)^{n/2}\Vol(\Sf^{n+m})$.
\qed

\begin{remark}\label{constants}
{\em
It is immediate from \eqref{sinequ} that the constants in Proposition 
\ref{mainprop}$(i)$ satisfy
$\delta_{c,1}(n,m)\geq\dots\geq\delta_{c,n-1}(n,m)>0$.
Then, it follows from the proof of Theorem 
\ref{main1} that the constants $\varepsilon_k(n,m),1\leq k\leq n-1$, 
satisfy
$\varepsilon_1(n,m)\geq\dots\geq\varepsilon_{n-1}(n,m)>0$.
}
\end{remark}

{\noindent{\it Proof of Corollary \ref{Wei}.}}
It follows immediately from Theorem \ref{main1} for $k=1$.\qed

\vspace{1ex}

{\noindent{\it Proof of Corollary \ref{Einstein}.}}
Since $M^n$ is Einstein and $n\geq 3$, it has constant normalized Ricci 
curvature equal to $\rho$. Therefore, $\rho_k=\rho$ for every 
$k\in\{1,\dots,n\}$. The proof follows immediately from Theorem 
\ref{main1}.\qed
\vspace{1ex}

{\noindent{\it Proof of Theorem \ref{main2}.}}
Let $f\colon M^n\to\Q^{n+m}_c,n\geq 2$, be an isometric 
immersion. We claim that the integral 
$$
\int_{M^n}(c+H_f^2-\lambda\rho^\perp-\rho)^{n/2}dM
$$
is invariant under conformal changes of the metric $\langle\cdot,\cdot\rangle$ 
of $\Q^{n+m}_c$. Indeed, 
from \eqref{Bsc} and \eqref{Bscp} it follows that
\begin{equation}\label{Bl}
n(n-1)(c+H_f^2-\lambda\rho^\perp-\rho)=\|\Phi_f\|^2-\lambda \|R^\perp\|,
\end{equation}
where $\Phi_f$ is the traceless part of the second fundamental form of $f$ and $R^\perp$ its normal curvature tensor.
Consider the conformal change 
$\widetilde{\langle\cdot,\cdot\rangle}=e^{2u}\langle\cdot,\cdot\rangle$ 
of the metric of $\Q^{n+m}_c$, where $u$ is a smooth function, 
and let $\tilde{f}\colon\tilde{M}^n\to (\Q^{n+m}_c,\widetilde{\langle\cdot,\cdot\rangle})$ 
be the isometric immersion induced by $f$. Then, at corresponding points the 
normal spaces of $f,\tilde{f}$ coincide and particularly, the second fundamental 
forms and the mean curvature vector fields 
are related by
$$
\alpha_{\tilde{f}}=\alpha_f-\langle\cdot,\cdot\rangle (\grad u)^\perp\;\; 
\text{and}\;\;{\cal H}_{\tilde f}=e^{-2u}\left({\cal H}_f-(\grad u)^\perp\right),
$$
where $(\grad u)^\perp$ is the normal component of the gradient of $u$ is 
with respect to the metric $\langle\cdot,\cdot\rangle$.
The above imply that $\Phi_{\tilde f}=\Phi_f$ and thus,
the traceless parts of the shape operators of $\tilde{f}, f$ associated to 
a local orthonormal frame field $\{\xi_a\}_{1\leq a\leq m}$ of $N_fM$ satisfy
$\tilde{B}_{\xi_a}=B_{\xi_a},1\leq a\leq m$.
Any orthonormal tangent frame field $\{e_i\}_{1\leq i\leq n}$ of 
$(M,\langle\cdot,\cdot\rangle)$ and any orthonormal normal frame field 
$\{\xi_a\}_{1\leq a\leq m}$ of $N_fM$ give rise to orthonormal frame fields 
$\{\tilde{e}_i\}_{1\leq i\leq n}$ of $(M,\widetilde{\langle\cdot,\cdot\rangle})$ 
with $\tilde{e}_i=e^{-u}e_i$ and $\{\tilde{\xi}_a\}_{1\leq a\leq m}$ of $N_{\tilde f}M$ with $\tilde{\xi}_a= e^{-u}\xi_a$. 
Therefore, a direct computation using \eqref{Bsc} and \eqref{Bscp} yields that 
$$
\|\Phi_{\tilde f}\|_{\sim}^2-\lambda \|\tilde{R}^\perp\|_{\sim} = e^{-2u}(\|\Phi_f\|^2-\lambda \|R^\perp\|).
$$
Now, by virtue of \eqref{Bl}, the claim follows from the above equality and the fact that the 
volume element of  $(M,\widetilde{\langle\cdot,\cdot\rangle})$ is $e^{nu}dM$. 

Therefore, under a conformal change of the metric of (a part of) $\Q^{n+m}_c$, 
we may assume without loss of generality that the ambient space is the 
Euclidean space $\R^{n+m}$. Since $H_f(p)={\sf H}_{\alpha_f(p)}$, $\rho(p)
=\boldsymbol{\rho}_{0}(\alpha_f(p))$ and $\rho^\perp(p)
=\boldsymbol{\rho}^\perp(\alpha_f(p))$, it follows from 
Proposition \ref{mainprop}$(ii)$ that 
$$
(H_f^2-\lambda\rho^\perp-\rho)^{n/2}(p)\geq 
\left(\delta_{0,\lambda}(n,m)\right)^{n/2}\int_{\Lambda_0(\alpha_f(p))}\vert\det A_\xi\vert\ dV_\xi
$$
for all $p\in M^n$. Then the proof follows arguing as in the proof of 
Theorem \ref{main1} with 
$\varepsilon_\lambda(n,m)=\left(\delta_{0,\lambda}(n,m)\right)^{n/2}\Vol(\Sf^{n+m-1})$.
\qed

\begin{remark}
{\em 
Since $\lambda\in [0,1)$, from \eqref{Bl} it follows that
$$
n(n-1)(c+H_f^2-\lambda\rho^\perp-\rho) \leq\|\Phi_f\|^2=\|\alpha_f\|^2-nH_f^2.
$$
This shows that our integrand is smaller than the (normalized) one 
of Shiohama and Xu in \cite{SX2}, which corresponds to $\lambda=0$.
}
\end{remark}

\section{Counterexamples to Theorem \ref{main1} for $k=n$}

\subsection{Minimal surfaces in spheres}

In this subsection, we discuss some properties of the isotropic surfaces 
in spheres, which are the basic tool for the construction of new 
compact 3-dimensional Wintgen ideal submanifolds.

Let $f\colon L^2\to\Sf^{n+2}$ denote an isometric immersion of a
two-dimensional oriented Riemannian manifold into the sphere $\Sf^{n+2}$.
The $k^{th}$\emph{-normal space} of $f$ at $x\in L^2$ for $k\geq 1$ is 
given by
$$
N^f_k(x)=\spa\big\{\a_f^{k+1}(X_1,\ldots,X_{k+1}):X_1,\ldots,X_{k+1}\in T_xL\big\}
$$
where $\a_f^2=\a_f\colon TL\times TL\to N_fL$ is the standard 
second fundamental form with values in the normal bundle and  
$$
\a_f^s\colon TL\times\cdots\times TL\to N_fL,\;\; s\geq 3, 
$$
is the symmetric tensor called
the $s^{th}$\emph{-fundamental form} defined inductively by
$$
\a_f^s(X_1,\ldots,X_s)=\big(\nabla^\perp_{X_s}\ldots
\nabla^\perp_{X_3}\a_f(X_2,X_1)\big)^\perp.
$$
Here  $\nabla^{\perp}$ is the induced connection in the normal bundle $N_fL$
and $(\;\;)^\perp$ means taking the projection onto the normal complement of 
$N^f_1\oplus\cdots\oplus N^f_{s-2}$ in $N_fL$.

Now suppose that $f\colon L^2\to\Sf^{n+2}$ denotes an oriented minimal and 
substantial surface. The latter means that the codimension cannot 
be reduced, in fact, not even locally since minimal surfaces are real 
analytic. It is known (cf. \cite{v,df}) that  the normal bundle 
$N_fL$ of $f$ splits along an open dense subset of $L^2$ as
$$
N_fL=N_1^f\oplus\dots\oplus N_m^f,\;\;\; m=[(n+1)/2],
$$
since all higher normal bundles  have rank two except possibly the last 
one that has rank one if $n$ is odd. The orientation of $L^2$ induces 
an orientation on each plane vector bundle $N_s^f$ given by the 
ordered pair 
$$
\xi_1^s=\a_f^{s+1}(X,\ldots,X),\;\;\;\xi_2^s=\a_f^{s+1}(JX,\ldots,X)
$$
where $0\neq X\in TL$ and $J$ is the complex structure of $L^2$ 
determined by the metric and orientation. 

For each $1\leq k\leq m$, the \emph{$k^{th}$-order curvature ellipse}
$\E^f_k(x)\subset N^f_{k}(x)$ is defined  by
$$
\E^f_k(x) = \big\{\alpha_f^{k+1}(Z^{\varphi},\ldots,Z^{\varphi})\colon\,
Z^{\varphi}=\cos\varphi Z+\sin\varphi JZ\;\mbox{and}\;\varphi\in[0,2\pi)\big\}
$$
where $Z\in T_xL$ is any vector of unit length.

A surface $f\colon L^2\to\Sf^{n+2}$ is called 
$r$-\emph{isotropic} if it is minimal and at any $x\in L^2$ 
and for any $1\leq k\leq r$ the ellipses of curvature $\E^f_k(x)$ 
contained in all two-dimensional  $N^f_k$$\,{}^{\prime}$s 
are circles.  
We point out that there are alternative ways to define 
isotropy for surfaces, for instance, in terms of the vanishing of higher order Hopf 
differentials \cite{v1}.

We consider the open and dense subset $L_1$ of $L^2$ defined by 
$$
L_1=\big\{x\in L^2:\dim N^f_1(x)\text{ is maximal}\big\}.
$$ Then 
the 1st normal spaces along $L_1$ form a subbundle $N^f_1|_{L_1}$ 
of the normal bundle $N_fL$. Inductively, we define the open and dense 
subset 
\begin{equation*}
L_s=\big\{x\in L_{s-1}:\dim N^f_s(x)\text{ is maximal}\big\}
\end{equation*}
and similarly the $s$-th normal spaces along $L_s$ form a subbundle 
$N^f_s|_{L_s}$ of the normal bundle. The following result was proved 
in \cite{Ch} (see also \cite{v,dv}).

\begin{proposition}\label{extend} 
If the surface $f$ is $r$-isotropic, then each 
$L^2\smallsetminus L_s, 1\leq s\leq r$, consists of isolated points 
and the vector bundle $N^f_s|_{L_s}$ smoothly extends 
to a vector bundle over $L^2$ still denoted by $N^f_s$.
\end{proposition}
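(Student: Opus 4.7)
The plan is to argue locally in an isothermal chart and exploit the classical ``holomorphic sections have isolated zeros'' principle. Fix $x_0\in L^2$ and a simply connected neighborhood $U$ carrying an isothermal coordinate $z$ with $z(x_0)=0$, and let $Z=\d/\d z$. Introduce the complexified normal sections
$$
\varphi_s=\alpha_f^{s+1}(Z,\ldots,Z)\in N_fL\otimes\C,\quad 1\leq s\leq r.
$$
Since $\alpha_f^{s+1}$ is symmetric and $\{Z,\bar Z\}$ is a $\C$-basis of $TL\otimes\C$, on $L_s$ one has $N^f_s\otimes\C=\span_\C\{\varphi_s,\bar\varphi_s\}$, and the circularity of the $s$-th curvature ellipse is equivalent to the null condition $\<\varphi_s,\varphi_s\>=0$.

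I would then prove by induction on $s\leq r$ the following \emph{holomorphy lemma}: modulo $N^f_1\oplus\cdots\oplus N^f_{s-1}$ (already extended to $U$ by the inductive step),
$$
\nap_{\bar Z}\varphi_s\equiv c_s(z)\,\bar\varphi_{s-1}
$$
for some smooth complex coefficient $c_s$. The proof combines the Codazzi equation in the sphere, minimality of $f$ (which eliminates the tangential trace in Codazzi), and the null identities $\<\varphi_k,\varphi_k\>=0$ for $k<s$ (which rule out a stray $\varphi_{s-1}$ contribution). Consequently, modulo the lower-order subbundles, $\varphi_s$ is a holomorphic section of a rank-one subbundle of the complexified normal bundle, so it admits a local expansion $\varphi_s(z)=z^{n_s}\tilde\varphi_s(z)$ with $\tilde\varphi_s$ continuous and $\tilde\varphi_s(0)\neq 0$.

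From this expansion, the zeros of $\varphi_s$ in $U$---which are exactly the points where $\dim N^f_s$ drops---are isolated; covering $L^2$ by such charts gives the discreteness of $L^2\smallsetminus L_s$ globally. The smooth extension of $N^f_s|_{L_s}$ across $x_0$ is defined by
$$
N^f_s(x_0):=\span_\R\bigl\{\tilde\varphi_s(0),\,\overline{\tilde\varphi_s(0)}\bigr\},
$$
and independence from the choice of $z$ follows from the transformation law: under a biholomorphic change $z\mapsto w(z)$ with $w'(0)\neq 0$, the normalized section $\tilde\varphi_s$ is multiplied by a nowhere-vanishing holomorphic function, hence its real span is unchanged.

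The main obstacle is the holomorphy lemma. A priori, $\nap_{\bar Z}\varphi_s$ may carry components in every lower-order normal plane $N^f_k$, $k<s$, as well as a spurious $\varphi_{s-1}$ contribution; showing that only the $\bar\varphi_{s-1}$ direction survives is an inductive Codazzi-type computation in which minimality and the isotropy relations $\<\varphi_k,\varphi_k\>=0$ must be exploited at every level of the tower. This is precisely where the $r$-isotropy hypothesis enters essentially, and once it is in place the remainder of the proof reduces to classical facts about holomorphic sections of complex line bundles.
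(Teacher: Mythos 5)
The paper does not give its own proof of Proposition~\ref{extend}; it cites Chern \cite{Ch} (and also \cite{v,dv}). Your strategy---complexify, turn $\varphi_s=\alpha_f^{s+1}(Z,\dots,Z)$ into a holomorphic section via Codazzi/Ricci and the isotropy identities, factor out the vanishing order at a degenerate point, and then extend the plane field by the normalized section---is precisely the mechanism used in those references, so the overall plan is the right one.

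Two points need tightening before this constitutes a proof. First, the statement of your ``holomorphy lemma'' is incoherent as written: you assert $\nap_{\bar Z}\varphi_s\equiv c_s(z)\,\bar\varphi_{s-1}$ modulo $N^f_1\oplus\cdots\oplus N^f_{s-1}$, but $\bar\varphi_{s-1}$ already lies in $N^f_{s-1}\otimes\C$, so the right-hand side is $\equiv 0$ in that quotient and the equation carries no information. What the Codazzi/Ricci induction actually gives (using minimality and $\<\varphi_k,\varphi_k\>=0$ for $k\le s-1$) is that $\nap_{\bar Z}\varphi_s$ takes values in $\bigl(\bigoplus_{k\le s-1} N^f_k\bigr)\otimes\C$; equivalently, the class $[\varphi_s]$ is a $\bar\partial$-holomorphic section of the quotient bundle $(N_fL\otimes\C)/\bigoplus_{k<s}(N^f_k\otimes\C)$ with its Koszul--Malgrange structure. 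That is the statement which yields the local factorization $\varphi_s=z^{n_s}\tilde\varphi_s$ with $\tilde\varphi_s(0)\neq0$, and hence isolated zeros.

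Second, you define the extension at a degenerate point by $N^f_s(x_0)=\span_\R\{\tilde\varphi_s(0),\overline{\tilde\varphi_s(0)}\}$ but never verify that this span is two-dimensional, and it is not automatic: for a merely minimal (non-isotropic) surface the real and imaginary parts of $\tilde\varphi_s(0)$ could be parallel and the extended ``bundle'' would drop rank at $x_0$. This is exactly where the $s$-isotropy hypothesis re-enters: $\<\varphi_s,\varphi_s\>\equiv 0$ forces $\<\tilde\varphi_s,\tilde\varphi_s\>\equiv 0$ by continuity, so writing $\tilde\varphi_s(0)=a+ib$ one gets $\|a\|=\|b\|\neq0$ and $\<a,b\>=0$, a genuine $2$-plane. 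With these two repairs the argument closes and agrees with the approach of the references the paper cites.
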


\subsection{A class of compact 3-dimensional Wintgen submanifolds}

In the sequel, let $f\colon L^2\to\Sf^{2n+2},n\geq 2$, be a substantial  
\emph{$(n-1)$-isotropic} surface. Then Proposition \ref{extend} implies 
that each plane bundle $N^f_s|_{L_s}, 1\leq s \leq n-1$, smoothly extends 
to a plane bundle over $L^2$. Clearly, $N^f_n$ is the orthogonal complement of 
$N^f_1\oplus\dots\oplus N^f_{n-1}$ in $N_fL$. 
There are plenty of compact $(n-1)$-isotropic surfaces in $\Sf^{2n+2},n\geq 2$. 
This is the case of pseudoholomorphic curves in the nearly 
K\"ahler $\Sf^6$. In fact, it is known that they are $1$-isotropic \cite{Br,EschVl,v1}. 
Moreover, there are compact substantial pseudoholomorphic curves in the nearly 
K\"ahler $\Sf^6$ of positive genus (see \cite{Br}). Additionally, all $1$-isotropic tori 
in $\Sf^6$ were described in \cite{bpw}. 

The following result provides a method to produce new compact Wintgen ideal 
submanifolds. 

\begin{theorem}\label{Wint}
Let $f\colon L^2\to\Sf^{2n+2},n\geq 2$, be a compact oriented substantial 
$(n-1)$-isotropic surface of genus $g(L)$ and let $M^3$ be 
the total space of the unit bundle $p\colon UN^f_n\rightarrow L^2$ of 
the plane bundle $N^f_n$. 
Then $\phi_f\colon M^3\to\Sf^{2n+2}$ given by 
$\phi_f(x,w)=w$ is a minimal Wintgen ideal submanifold, 
whose first Betti number satisfies $b_1(M^3)\geq2 g(L)$.
\end{theorem}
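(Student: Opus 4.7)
My plan is to prove the three assertions separately: the lower bound on $b_1(M^3)$ is purely topological, while the minimality and the Wintgen ideal property require a computation of the second fundamental form of $\phi_f$ that exploits the $(n-1)$-isotropy of $f$.

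For the topological part, I would invoke the Gysin sequence for the oriented $S^1$-bundle $\pi\colon M^3 = UN^f_n \to L^2$:
\[
0 \to H^1(L^2;\R) \xrightarrow{\pi^*} H^1(M^3;\R) \to H^0(L^2;\R) \xrightarrow{\cup e} H^2(L^2;\R),
\]
where $e$ denotes the real Euler class of the oriented plane bundle $N^f_n$. Injectivity of $\pi^*$ immediately yields $b_1(M^3) \geq b_1(L^2) = 2g(L)$, regardless of whether $N^f_n$ is trivial as an oriented plane bundle.

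For the geometric assertions, fix $(x_0, w_0) \in M^3$. Using the restriction of $\nabla^\perp$ to $N^f_n$, I would split $T_{(x_0,w_0)}M^3 = \mathcal V \oplus \mathcal H$ into a vertical line, spanned by the unit vector $v_0 \in N^f_n(x_0)$ perpendicular to $w_0$, and a horizontal plane consisting of $\nabla^\perp$-parallel lifts of $T_{x_0}L$. Since $\phi_f(x,w) = w$ is the position vector in $\R^{2n+3}$, a horizontal lift $\tilde X$ of $X \in T_{x_0}L$ satisfies
\[
d\phi_f(\tilde X) = \nabla^{\R^{2n+3}}_X w = -A^f_{w_0} X + \nabla^\perp_X w.
\]
The first term vanishes because $\alpha_f$ takes values in $N^f_1$ while $w_0 \in N^f_n$ with $n \geq 2$. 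Moreover, the block-tridiagonal structure of $\nabla^\perp$ relative to the splitting $N_fL = N^f_1 \oplus \cdots \oplus N^f_n$, combined with the horizontal choice, forces $\nabla^\perp_X w \in N^f_{n-1}(x_0)$. The $(n-1)$-isotropy and substantiality of $f$ then guarantee that the differential $d\phi_f$ is injective, so $\phi_f$ is a genuine $3$-dimensional immersion into $\Sf^{2n+2}$.

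To compute $\alpha_{\phi_f}$, I would choose adapted orthonormal frames: $\{e_1, e_2 = Je_1\}$ of $T_{x_0}L$ and, for each $1 \leq k \leq n$, an oriented frame $\{\eta^k_1, \eta^k_2\}$ of $N^f_k$ aligned with $\alpha^{k+1}_f(e_1,\ldots,e_1)$ and $\alpha^{k+1}_f(e_2, e_1,\ldots, e_1)$. The $(n-1)$-isotropy implies that these two vectors have equal lengths and are orthogonal for $1 \leq k \leq n-1$, which is the crucial structural input. Taking $w_0 = \eta^n_1$ and $v_0 = \eta^n_2$, I would then compute $\alpha_{\phi_f}$ by differentiating $d\phi_f$ and projecting to the normal space of $\phi_f$ in $\Sf^{2n+2}$, organizing the calculation into vertical-vertical, vertical-horizontal and horizontal-horizontal blocks. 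After a suitable diagonalization, I expect the shape operators to take exactly the trace-free block form prescribed by Lemma \ref{curvatures}(ii), thereby establishing minimality and the DDVV equality in one stroke.

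The main obstacle will be the detailed bookkeeping in this last step: verifying that $(n-1)$-isotropy propagates through the covariant derivatives of the higher fundamental forms so as to force the curvature ellipse of $\phi_f$ at every point to be a circle, which is the defining property of a Wintgen ideal $3$-submanifold. The key technical tools will be the Codazzi equations for the tower $\alpha^k_f$ and the orthogonality relations among the various $N^f_s$ inherited from substantiality of $f$.
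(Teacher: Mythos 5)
Your topological argument via the Gysin sequence is essentially the same as the paper's (the paper works over $\Z$, you work over $\R$; both give $b_1(M^3)\geq 2g(L)$ from injectivity of $p^*$ on $H^1$), so that part is fine.

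For the geometric part, however, your proposal is a plan rather than a proof. You correctly identify that $d\phi_f(\tilde X)$ has no $A^f_{w_0}$-term because $\alpha_f$ lands in $N^f_1$ while $w_0\in N^f_n$, and that the block-tridiagonal structure of $\nabla^\perp$ pushes $\nabla^\perp_X w$ into $N^f_{n-1}$ for horizontal lifts, but the crucial computation of $\alpha_{\phi_f}$ and the verification that the shape operators fall into the DDVV-equality form of Lemma \ref{curvatures}(ii) is not carried out---you yourself flag it as ``the main obstacle'' and state your conclusion as an expectation. The paper short-circuits exactly this computation by citing three existing results: Proposition 4.2 of Dajczer--Kasioumis--Savas-Halilaj--Vlachos \cite{dksv} to get that $\phi_f$ is a minimal immersion of rank two with polar surface $f$; Proposition 8 of Dajczer--Florit \cite{df} to get that the first curvature ellipse of $\phi_f$ is a circle at every point; and Corollary 1.2 of Ge--Tang \cite{GT} to translate ``circular first curvature ellipse for a minimal submanifold with relative nullity $n-2$'' into ``Wintgen ideal.'' This last characterization is a key simplification you have not anticipated: rather than diagonalizing the full second fundamental form of the $3$-manifold to match Lemma \ref{curvatures}(ii), one only needs the circularity of the first curvature ellipse, which is a much weaker pointwise condition and which transfers cleanly from the $(n-1)$-isotropy of $f$ through the polar-surface construction. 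To complete your line of attack you would either have to carry out the adapted-frame computation in full (nontrivial bookkeeping, as you note) or identify the intermediate ``polar surface / rank-two / curvature-ellipse'' structure that makes the deduction modular, which is what the cited results provide.
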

\proof
It follows from Proposition 4.2 in \cite{dksv} 
that the map  $\phi_f\colon M^3\to\Sf^{2n+2}$ is a minimal immersion 
of rank two with polar surface $f$. Moreover, Proposition 8 in \cite{df} implies 
that the first curvature ellipse (as defined in \cite{df}) of $\phi_f$ is a circle at any point. It follows 
from Corollary 1.2 in \cite{GT} that a minimal $n$-dimensional submanifold 
of a space form with relative nullity $n-2$ is a Wintgen ideal submanifold if 
and only if its first curvature ellipse is a circle at any point. Hence $\phi_f$ 
is a Wintgen ideal submanifold. 

Now we argue that $b_1(M^3)\geq2 g(L)$. 
Clearly, the circle bundle $p\colon UN^f_n\rightarrow L^2$
is oriented (as defined in \cite[p.\ 114]{BT}). 
Then we have from \cite[Theorem 13.2, p. 390]{B} 
or \cite[p.\ 438]{Hat} that the cohomology rings of 
$M^3$ and $L^2$ are related by the following long exact 
sequence known as the Gysin sequence:
\be\label{gysin}
\!\!\!\cdots\xrightarrow{\Psi_{i-1}}
H^i(L;\Z)\xrightarrow{p^*_i}H^i(M;\Z)\xrightarrow{\Phi_i}
H^{i-1}(L;\Z)\xrightarrow{\Psi_i}H^{i+1}(L;\Z)\nonumber
\xrightarrow{p^*_{i+1}}\cdots
\ee
The exactness gives that $p^*_1$ is a monomorphism 
and ${\rm {Im}}\,p^*_1=\ker \Phi_1$. On the other hand we have 
$$
{\rm {Im}}\,p^*_1\cong H^1(L;\Z)/\ker\,p^* _1\cong H^1(L;\Z)
\cong\Z^{2g(L)}.
$$
Thus $\ker \Phi_1\cong\Z^{2g(L)}$. The desired inequality follows 
from the fact that $H^1(M;\Z)$ contains a subgroup isomorphic to 
$\Z^{2g(L)}$.\qed

\medskip

Theorem \ref{Wint} shows that each compact oriented substantial 
$(n-1)$-isotropic surface $f\colon L^2\to\Sf^{2n+2},n\geq 2$, of 
positive genus $g(L)$ gives rise to a compact 3-dimensional Wintgen ideal
submanifold with positive first Betti number. For instance, $f$ can be chosen 
to be a compact substantial pseudoholomorphic curve in the nearly 
K\"ahler $\Sf^6$ of positive genus (see \cite{Br}), or a $1$-isotropic torus  
in $\Sf^6$ as described in \cite{bpw}. Clearly, these Wintgen ideal submanifolds 
violate the inequality in Theorem \ref{main1} for $k=n=3$.

\vspace{10mm}

\noindent Christos-Raent Onti\\
Department of Mathematics and Statistics\\
University of Cyprus\\
1678, Nicosia -- Cyprus\\
e-mail: onti.christos-raent@ucy.ac.cy
\medskip

\noindent Kleanthis Polymerakis\\
University of Ioannina \\
Department of Mathematics\\
45110 Ioannina -- Greece\\
e-mail: kpolymerakis@uoi.gr
\medskip

\noindent Theodoros Vlachos\\
University of Ioannina \\
Department of Mathematics\\
45110 Ioannina -- Greece\\
e-mail: tvlachos@uoi.gr

\end{document}